\renewcommand{\@seccntformat}[1]{{\csname the#1\endcsname}.\hspace{.5em}}
\newtheorem{thm}{Theorem}[section]
\newtheorem{conj}[thm]{Conjecture}
\renewcommand{\qed}{\hfill$\Box$\medskip}
\numberwithin{equation}{section}
\begin{document}

\renewcommand{\thefootnote}{*}

\begin{center}
{\large\bf Factors of some truncated basic hypergeometric series}
\end{center}

\vskip 2mm \centerline{Victor J. W. Guo}
\begin{center}
{\footnotesize School of Mathematical Sciences, Huaiyin Normal University, Huai'an 223300, Jiangsu\\
 People's Republic of China\\[5pt]

{\tt jwguo@hytc.edu.cn}  }
\end{center}

\vskip 0.7cm \noindent{\small{\bf Abstract.}  We prove that certain basic hypergeometric series truncated at $k=n-1$ have
the factor $\Phi_n(q)^2$, where $\Phi_n(q)$ is the $n$-th cyclotomic polynomial. This confirms two recent conjectures
of the author and Zudilin.  We also put forward some conjectures on $q$-congruences modulo $\Phi_n(q)^2$.}

\vskip 3mm \noindent {\it Keywords}: supercongruence; basic hypergeometric series; cyclotomic polynomials; $q$-binomial theorem.

\vskip 3mm \noindent {\it 2010 Mathematics Subject Classifications}:  33D15; 11A07; 11F33.

\section{Introduction}
Rodriguez-Villegas  \cite{RV} discovered numerically some remarkable supercongruences on truncated hypergeometric series
related to a Calabi-Yau manifold. The simplest supercongruence of Rodriguez-Villegas is: for any odd prime $p$,
\begin{align}
\sum_{k=0}^{p-1}\frac{{2k\choose k}^2}{16^k} \equiv
(-1)^{(p-1)/2}\pmod{p^2}.
\label{eq:RV}
\end{align}
It has caught the interests of many authors (see \cite{CLZ, GZ14, Mortenson1, SunZH, SunZW, Tauraso1,Tauraso2}). For example, Guo and Zeng \cite{GZ14} proved a $q$-analogue of
\eqref{eq:RV}:
\begin{align}
\sum_{k=0}^{p-1}\frac{(q;q^2)_k^2}{(q^2;q^2)_k^2} \equiv
(-1)^{(p-1)/2}q^{(1-p^2)/4}\pmod{[p]^2}\quad\text{for any odd prime $p$}. \label{eq:GZ-RV}
\end{align}
Here and in what follows,
$(a;q)_n=(1-a)(1-aq)\cdots (1-aq^{n-1})$
is the {\it $q$-shifted factorial}, and $[n]=1+q+\cdots+q^{n-1}$ is the {\it $q$-integer}.
The $q$-congruence \eqref{eq:GZ-RV} has been further generalized by Guo, Pan, and Zhang \cite{GPZ},
Ni and Pan \cite{NP}, and Guo \cite{Guo-par}. A slight generalization of \eqref{eq:GZ-RV}
can be stated as follows (see \cite{Guo-par,NP}):
\begin{align*}
\sum_{k=0}^{n-1}\frac{(q;q^2)_k^2}{(q^2;q^2)_k^2} \equiv
(-1)^{(n-1)/2}q^{(1-n^2)/4}\pmod{\Phi_n(q)^2}\quad\text{for positive odd $n$},
\end{align*}
where $\Phi_n(q)$ is the $n$-th {\it cyclotomic polynomial} in $q$.

Recently, the author and Zudilin \cite[Conjecture 5.3]{GuoZu} conjectured that, for $d\geqslant 3$ and $n\equiv -1\pmod{d}$,
\begin{align}
\sum_{k=0}^{n-1}\frac{(q;q^d)_k^d q^{dk}}{(q^d;q^d)_k^d} \equiv 0 \pmod{\Phi_n(q)^2}.  \label{eq:d-1}
\end{align}
They \cite[Conjecture 5.4]{GuoZu} also conjectured that, for
$n,d\geqslant 2$ and $n\equiv 1\pmod{d}$,
\begin{align}
\sum_{k=0}^{n-1}\frac{(q^{-1};q^d)_k^d q^{dk}}{(q^d;q^d)_k^d} \equiv
0 \pmod{\Phi_n(q)^2}.  \label{eq:d-2}
\end{align}

In this paper, we shall confirm the above two conjectures. It turns out that much more is true and we shall prove the following unified generalization of \eqref{eq:d-1} and \eqref{eq:d-2}.
\begin{thm}\label{main-1}
Let $d\geqslant 2$ be an integer. Let $r\leqslant d-2$ be an integer such that $\gcd(r,d)=1$. Then, for all positive integers $n$
with $n\equiv -r\pmod{d}$ and $n\geqslant d-r$, we have
\begin{align}
\sum_{k=0}^{n-1}\frac{(q^r;q^d)_k^d q^{dk}}{(q^d;q^d)_k^d} \equiv 0 \pmod{\Phi_n(q)^2}. \label{eq:main}
\end{align}
\end{thm}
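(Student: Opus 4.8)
The plan is to turn the single cyclotomic factor that is visible term-by-term into a double factor by a symmetric ``creative microscoping'' argument. Throughout put $m=(n+r)/d$ and $L=n-m$; the hypotheses $n\equiv-r\pmod d$ and $n\ge d-r$ make $m$ a positive integer, and $\gcd(d,n)=\gcd(d,r)=1$ shows that $\Phi_n(q)\nmid(q^d;q^d)_k$ for $0\le k\le n-1$. Consequently all denominators in \eqref{eq:main} are invertible modulo $\Phi_n(q)^2$, so every congruence below makes sense. First I would locate the factors of $\Phi_n(q)$: in $(q^r;q^d)_k=\prod_{j=0}^{k-1}(1-q^{r+jd})$ the exponent $r+jd$ is a multiple of $n$ exactly when $j\equiv L\pmod n$, the first instance being $j=L$, where $r+Ld=(d-1)n$; thus $(q^r;q^d)_k$ picks up the factor $1-q^{(d-1)n}$ precisely for $k\ge L+1$. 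Raising to the $d$-th power and using $d\ge2$, every term with $k\ge L+1$ is divisible by $\Phi_n(q)^d$, so the sum in \eqref{eq:main} is congruent to its truncation $\sum_{k=0}^{L}$ modulo $\Phi_n(q)^2$, and in particular is already divisible by $\Phi_n(q)$.

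To gain the second factor I introduce a parameter $a$ and set
\[
F(a)=\sum_{k=0}^{n-1}\frac{(aq^r;q^d)_k\,(q^r/a;q^d)_k\,(q^r;q^d)_k^{d-2}\,q^{dk}}{(q^d;q^d)_k^d},
\]
so that $F(1)$ is the left-hand side of \eqref{eq:main}. Since $(aq^r;q^d)_k(q^r/a;q^d)_k=\prod_{j=0}^{k-1}\bigl(1-(a+a^{-1})q^{r+jd}+q^{2(r+jd)}\bigr)$, the series $F(a)$ is a polynomial in $\lambda=a+a^{-1}$ whose coefficients have denominators prime to $\Phi_n(q)$. The decisive point is that the specializations $a=1$ and $a=q^{-n}$ give $\lambda=2$ and $\lambda=q^n+q^{-n}$, and
\[
(q^n+q^{-n})-2=q^{-n}(q^n-1)^2\equiv0\pmod{\Phi_n(q)^2}.
\]
Evaluating the polynomial $F$ at these two congruent values of $\lambda$ yields $F(1)\equiv F(q^{-n})\pmod{\Phi_n(q)^2}$, so it suffices to prove $F(q^{-n})\equiv0\pmod{\Phi_n(q)^2}$.

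Here the $q$-binomial theorem enters. At $a=q^{-n}$ one has $q^{r+n}=q^{md}=(q^d)^m$, and the pair $(q^{r-n};q^d)_k(q^{r+n};q^d)_k$ contributes at $j=L$ the factor $(1-q^{(d-2)n})(1-q^{dn})$, a multiple of $\Phi_n(q)^2$; hence again only $k\le L$ survive modulo $\Phi_n(q)^2$. When $d=2$ there are no remaining $(q^r;q^d)_k$ factors, the truncated sum is a terminating ${}_2\phi_1$, and $q$-Chu--Vandermonde evaluates it as a multiple of $(q^{d(1-m)};q^d)_L$, which vanishes because $1\le m\le L$; thus $F(q^{-n})=0$ and \eqref{eq:main} follows for $d=2$.

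I expect the main obstacle to be the case $d\ge3$, where the head of $F(q^{-n})$ still carries the factor $(q^r;q^d)_k^{d-2}$ and is a genuine partial sum rather than a classical terminating series, so that $F(q^{-n})$ is no longer identically zero but only divisible by $\Phi_n(q)^2$. My approach would be to iterate the $\lambda$-reduction on further symmetric parameters, trading each surviving pair of plain factors $(q^r;q^d)_k$ for a microscoped pair $(q^{r\pm n};q^d)_k$ at the cost of $\Phi_n(q)^2$ each time, until the remaining sum can be handled by the $q$-binomial theorem in the bases $q^{md}$ and $q^{2r-md}$. Proving that the resulting evaluation is divisible by $\Phi_n(q)^2$ for every admissible pair $(d,r)$ is the crux of the argument; everything else reduces to the bookkeeping above.
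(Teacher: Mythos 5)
Your argument is complete and correct only for $d=2$: there your $F(a)$ coincides exactly with the paper's parametric sum (the $d=2$ case of its even-$d$ generalization), the $\lambda=a+a^{-1}$ device is a legitimate repackaging of working modulo $(1-aq^n)(a-q^n)$, and the $q$-Chu--Vandermonde evaluation of $F(q^{-n})$ as a multiple of $(q^{2(1-m)};q^2)_L$ with $1\leqslant m\leqslant L$ is sound. But for $d\geqslant 3$ there is a genuine gap, and you name it yourself: after the $\lambda$-reduction you are left with a truncated sum carrying $(q^r;q^d)_k^{d-2}$, which is ``no longer identically zero but only divisible by $\Phi_n(q)^2$'' --- and proving that divisibility is not easier than the original theorem; it is essentially the same statement with two of the $d$ numerator slots shifted by $\pm n$. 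The iteration you sketch does not repair this: specializing several symmetric pairs $(a_iq^r,q^r/a_i)$ all at $a_i=q^{\pm n}$, with the denominator $(q^d;q^d)_k^d$ left untouched, produces $\sum_k (q^{r-n};q^d)_k^{\lfloor d/2\rfloor}(q^{r+n};q^d)_k^{\lceil d/2\rceil-\lfloor d/2\rfloor+\lfloor d/2\rfloor-\cdots}\cdots q^{dk}/(q^d;q^d)_k^d$-type sums that are not terminating series with a classical evaluation, so there is no mechanism forcing the final value to vanish or to be divisible by $\Phi_n(q)^2$.

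The paper's key idea, which your proposal is missing, is that the parameter must be distributed over \emph{all} slots with \emph{distinct} powers, including the denominator: the numerator factors carry $a^{d-1}q^r, a^{d-3}q^r,\ldots,a^{1-d}q^r$ and the denominator factors carry $a^{d-2}q^d, a^{d-4}q^d,\ldots,a^{2-d}q^d$ (with one plain $(q^r;q^d)_k/(q^d;q^d)_k$ left over when $d$ is odd). At $a=q^{\pm n}$ the exponents interleave so that each ratio pairs off, via identities like \eqref{qdk-begin}--\eqref{qdk-end}, into a quotient of shifted factorials that is a \emph{polynomial in $q^{dk}$ of degree $(n+r-d)/d$}; the one remaining factor $(q^{r-(d-1)n};q^d)_kq^{dk}/(q^d;q^d)_k$ supplies $(-1)^k{(dn-n-r)/d\brack k}_{q^d}q^{d{(dn-n-r)/d-k\choose 2}}$ up to a constant, and since the product of the polynomial pieces has total degree $(n+r-d)(d-1)/d\leqslant (dn-n-r)/d-1$, the finite $q$-binomial theorem \eqref{eq:qbino} makes the specialized sum vanish \emph{identically}, for both $a=q^n$ and $a=q^{-n}$. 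Keeping the denominator parameter-free, as you do, destroys exactly this pairing, which is why your $d=2$ case works (two numerator slots, no leftover plain factors) and the general case stalls. A small side remark: your claim that the truncation to $\sum_{k=0}^{L}$ shows the sum is ``already divisible by $\Phi_n(q)$'' is a non sequitur --- discarding a tail divisible by $\Phi_n(q)^d$ says nothing about the head --- and in the boundary case $m=1$ (i.e.\ $n=d-r$) no term of the sum is individually divisible by $\Phi_n(q)$ at all; fortunately nothing later in your write-up depends on it.
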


It is clear that if $d\geqslant 3$ and $r=1$, then the congruence \eqref{eq:main} reduces to \eqref{eq:d-1}, while if $d\geqslant 2$ and $r=-1$,
then the congruence \eqref{eq:main} leads to \eqref{eq:d-2}.

For $d=2$ and $r=-1$, we have the following stronger result and conjecture.
\begin{thm} \label{main-2}
Let $n>1$ be a positive odd integer. Then
\begin{align}
\sum_{k=0}^{n-1}\frac{(q^{-1};q^2)_k^2 q^{2k}}{(q^2;q^2)_k^2}     &\equiv 0 \pmod{[n]\Phi_n(q)}, \label{eq:main-2} \\[5pt]
\sum_{k=0}^{(n+1)/2}\frac{(q^{-1};q^2)_k^2 q^{2k}}{(q^2;q^2)_k^2} &\equiv 0 \pmod{[n]\Phi_n(q)}. \label{eq:main-3}
\end{align}
\end{thm}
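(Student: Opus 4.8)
The plan is to use the factorization
$[n]\Phi_n(q)=\Phi_n(q)^2\prod_{e\mid n,\,1<e<n}\Phi_e(q)$, which holds because $n$ is odd (so $[n]=\prod_{e\mid n,\,e>1}\Phi_e(q)$), together with the coprimality of distinct cyclotomic polynomials. Writing $f(k)=\dfrac{(q^{-1};q^2)_k^2\,q^{2k}}{(q^2;q^2)_k^2}$, $S_1=\sum_{k=0}^{n-1}f(k)$ and $S_2=\sum_{k=0}^{(n+1)/2}f(k)$, it then suffices to prove (i) $\Phi_n(q)^2\mid S_1,S_2$ and (ii) $\Phi_e(q)\mid S_1,S_2$ for every divisor $e$ of $n$ with $1<e<n$. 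Everything will bootstrap from the instance $d=2$, $r=-1$ of Theorem~\ref{main-1}, which (applied with summation length $N$, for any odd $N\ge 3$) says $\sum_{k=0}^{N-1}f(k)\equiv 0\pmod{\Phi_N(q)^2}$.

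For (i), the case $N=n$ gives $\Phi_n(q)^2\mid S_1$ at once. To pass to $S_2$ I would show the tail $S_1-S_2=\sum_{k=(n+3)/2}^{n-1}f(k)$ is itself divisible by $\Phi_n(q)^2$: for each such $k$ the numerator $(q^{-1};q^2)_k$ contains the factor $1-q^n$ (namely the $i=(n+1)/2$ factor $1-q^{2i-1}$), so $\Phi_n(q)^2$ divides the squared numerator, while $\Phi_n(q)$ is prime to $(q^2;q^2)_k$ since $2i$ is never a multiple of the odd number $n$ for $1\le i\le n-1$. Hence $S_2\equiv S_1\equiv 0\pmod{\Phi_n(q)^2}$.

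For (ii), fix such an $e$ (necessarily odd) and let $\zeta$ be a primitive $e$-th root of unity. I would first verify that every $f(k)$ is finite at $q=\zeta$: the number of vanishing factors of $(q^{-1};q^2)_k$ (those with $2i-1\equiv 0\pmod e$) always meets or exceeds the number of vanishing factors of $(q^2;q^2)_k$ (those with $i\equiv 0\pmod e$), so no poles occur and $S_1,S_2$ are regular at $\zeta$. The structural key is a block relation: since the term ratio $f(k+1)/f(k)=q^2(1-q^{2k-1})^2/(1-q^{2k+2})^2$ is invariant under $k\mapsto k+e$ at $q=\zeta$, telescoping within a block gives $f(ae+j)=f(ae)\,f(j)$ for $0\le j\le e-1$ (interpreted as limits, the vanishing of the terms $f(j)$ with $(e+3)/2\le j\le e-1$ being matched on both sides). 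Summing $S_1$ over its $n/e$ complete blocks yields $S_1\big|_{q=\zeta}=\big(\sum_a f(ae)\big)\sum_{j=0}^{e-1}f(j)\big|_{q=\zeta}$; and since $(n+1)/2\equiv (e+1)/2\pmod e$, the truncation of $S_2$ falls exactly at the end of the surviving part of a block, so the same factorization reduces $S_2\big|_{q=\zeta}$ to a multiple of the one-period sum as well.

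Finally, the one-period sum is obtained for free: as $f(j)=0$ at $q=\zeta$ for $(e+3)/2\le j\le e-1$, the surviving part gives $\sum_{j=0}^{e-1}f(j)\big|_{q=\zeta}=\sum_{j=0}^{(e+1)/2}f(j)\big|_{q=\zeta}$, and this vanishes by the case $N=e$ of Theorem~\ref{main-1} (which gives $\Phi_e(q)^2\mid\sum_{k=0}^{e-1}f(k)$, in particular vanishing at $\zeta$). Thus $S_1,S_2$ are regular at $\zeta$ and vanish there, i.e. $\Phi_e(q)\mid S_1,S_2$, which with (i) and coprimality completes the proof. The step I expect to demand the most care is making the block relation $f(ae+j)=f(ae)f(j)$ rigorous as an identity of limits at $q=\zeta$, i.e. the zero/pole bookkeeping that correctly evaluates the indeterminate terms with $k\ge e$; no genuinely new arithmetic input is needed, since the whole strength of the modulus $[n]\Phi_n(q)$ is recycled out of Theorem~\ref{main-1} at the divisors of $n$.
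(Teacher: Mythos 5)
Your proposal is correct and matches the paper's own proof in essentially every step: the $\Phi_n(q)^2$ part comes from the $d=2$, $r=-1$ case of Theorem~\ref{main-1} together with the vanishing of $(q^{-1};q^2)_k^2$ modulo $\Phi_n(q)^2$ in the tail range $(n+1)/2<k\leqslant n-1$, and the divisibility by $\Phi_e(q)$ at each divisor $e\mid n$, $e>1$, is obtained exactly as in the paper via the periodicity relation $c_\zeta(\ell e+k)=c_\zeta(\ell e)c_\zeta(k)$ at an $e$-th root of unity $\zeta$ and the $N=e$ instance of the same congruences. The only cosmetic difference is that you assemble the modulus as $[n]\Phi_n(q)=\Phi_n(q)^2\prod_{e\mid n,\,1<e<n}\Phi_e(q)$, whereas the paper proves divisibility by $[n]$ for all divisors and then combines with $\Phi_n(q)^2$ via the least common multiple.
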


\begin{conj}The congruences \eqref{eq:main-2} and \eqref{eq:main-3} still hold modulo $[n]^2$.
\end{conj}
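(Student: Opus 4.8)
\noindent
The modulus $[n]^2=\prod_{m\mid n,\,m>1}\Phi_m(q)^2$ demands a squared cyclotomic factor at \emph{every} divisor $m\mid n$, not merely at $m=n$ together with first powers elsewhere, so the method behind Theorem~\ref{main-2} is too coarse. The natural tool is the creative microscope of Guo and Zudilin, precisely because a single divisibility by $(1-aq^n)(a-q^n)$ specializes at $a=1$ to $(1-q^n)^2=(q-1)^2[n]^2$, delivering all the squared factors \emph{simultaneously}. The plan is therefore to introduce the symmetric deformation
\[
f_n(a)=\sum_{k=0}^{n-1}\frac{(aq^{-1};q^2)_k\,(a^{-1}q^{-1};q^2)_k}{(aq^2;q^2)_k\,(a^{-1}q^2;q^2)_k}\,q^{2k},
\]
which reduces to the left-hand side of \eqref{eq:main-2} at $a=1$ and is manifestly invariant under $a\leftrightarrow a^{-1}$, and to prove $(1-aq^n)(a-q^n)\mid f_n(a)$ as rational functions of $a$.

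First I would establish $(1-aq^n)\mid f_n(a)$, i.e.\ $f_n(q^{-n})=0$. At $a=q^{-n}$ the numerator factor $(aq^{-1};q^2)_k=(q^{-n-1};q^2)_k$ acquires the vanishing factor $1-q^{0}$ exactly when $k\geqslant(n+3)/2$, so $f_n(q^{-n})$ is a \emph{terminating} series with top index $(n+1)/2$; this coincidence is the structural reason the truncation in \eqref{eq:main-3} sits alongside \eqref{eq:main-2}. I would evaluate this terminating series to $0$ either through a suitable terminating $q$-summation (a Watson-- or Jackson-type evaluation, or a limiting case thereof) or by exhibiting a $q$-WZ pair. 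The companion factor $(a-q^n)\mid f_n(a)$ then comes for free from the symmetry $f_n(a)=f_n(a^{-1})$, and since $1-aq^n$ and $a-q^n$ are coprime in $\mathbb{Q}(q)[a]$ their product divides $f_n(a)$.

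Passing to $a=1$: writing $f_n(a)=(1-aq^n)(a-q^n)\,g_n(a)$, each summand of $f_n$ is regular at $a=1$ (the denominators specialize to $(q^2;q^2)_k^2\neq0$), whence $f_n(1)=(1-q^n)^2\,g_n(1)$. The genuinely hard point is to show $g_n(1)$ is $\Phi_m(q)$-integral for every divisor $m\mid n$ with $m>1$; only then does the factor $(1-q^n)^2=(q-1)^2[n]^2$ survive intact to yield $[n]^2\mid f_n(1)$. Here the difficulty concentrates, because at a proper divisor $m<n$ the denominator $(q^2;q^2)_{n-1}^2$ is divisible by $\Phi_m(q)$ to order $2(n/m-1)$, and the crude second-order vanishing at $a=1$ must be shown not to be consumed by these poles. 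I would control this by a self-similar (blocking) reduction modulo $\Phi_m(q)^2$: split $k=im+j$ and exploit the quasi-periodicity of the summand under $k\mapsto k+m$, keeping first-order terms because $q^m\equiv1$ holds only modulo $\Phi_m(q)$ and not modulo $\Phi_m(q)^2$, so as to reduce the length-$n$ sum to the length-$m$ sum, which is $\equiv0\pmod{\Phi_m(q)^2}$ by Theorem~\ref{main-2} applied at level $m$ (legitimate, since any odd $m>1$ meets its hypotheses).

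The principal obstacle is exactly this proper-divisor analysis: the first-order correction in the $k\mapsto k+m$ quasi-periodicity generates cross terms that must cancel modulo $\Phi_m(q)^2$, and it is the failure to control these at \emph{second} order—first order alone yields only the weaker modulus $[n]\Phi_n(q)$—that keeps the conjecture open. The evaluation $f_n(q^{-n})=0$ is a secondary but nontrivial obstacle, since the series is divergent and the standard summation formulas do not apply verbatim, so a tailored transformation or WZ certificate will likely be needed. Finally, \eqref{eq:main-3} would be treated with the same deformation truncated at $(n+1)/2$; alternatively it follows from \eqref{eq:main-2} once the tail $\sum_{(n+3)/2\leqslant k\leqslant n-1}$ is shown to be $\equiv0\pmod{[n]^2}$, which again reduces to the same proper-divisor bookkeeping.
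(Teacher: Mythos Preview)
The statement you are attempting is a \emph{conjecture} in the paper; the paper offers no proof, so there is nothing to compare your argument against. Your write-up is itself only a strategy outline, and you candidly label the second-order blocking analysis at proper divisors $m\mid n$ as ``the principal obstacle'' that ``keeps the conjecture open''. So you have not proved the conjecture either; at best you have sketched where a proof would have to go.

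More seriously, the first step of your plan already fails: the deformation
\[
f_n(a)=\sum_{k=0}^{n-1}\frac{(aq^{-1};q^2)_k\,(a^{-1}q^{-1};q^2)_k}{(aq^{2};q^2)_k\,(a^{-1}q^{2};q^2)_k}\,q^{2k}
\]
does \emph{not} vanish at $a=q^{-n}$. For $n=3$ one computes directly
\[
f_3(q^{-3})=1+\frac{(1-q^{-4})(1-q^{2})}{(1-q^{-1})(1-q^{5})}\,q^{2}
+\frac{(1-q^{-4})(1-q^{-2})(1-q^{2})(1-q^{4})}{(1-q^{-1})(1-q)(1-q^{5})(1-q^{7})}\,q^{4}\neq 0,
\]
(e.g.\ at $q=2$ the value is $5782/3937$). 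The reason is that inserting $a$ into the denominator destroys the exact pairing that makes the paper's argument in Section~2 work: with the paper's choice $(q^2;q^2)_k^2$ in the denominator, the specialization $a=q^{\pm n}$ produces a sum of the form $\sum_k(-1)^k\genfrac{[}{]}{0pt}{}{N}{k}_{q^2}q^{2\binom{N-k}{2}}P(q^{2k})$ with $\deg P<N$, which vanishes by \eqref{eq:qbino}; your denominator $(aq^2;q^2)_k(a^{-1}q^2;q^2)_k$ breaks the degree count. Thus the divisibility $(1-aq^n)(a-q^n)\mid f_n(a)$, on which your entire microscope argument rests, is simply false for this $f_n$.

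If instead you use the paper's deformation (the $d=2$ case of \eqref{eq:a-e}), the vanishing at $a=q^{\pm n}$ does hold, but then passing to $a=1$ yields only $\Phi_n(q)^2$, not $(1-q^n)^2$: the quotient $g_n(1)$ is a genuine rational function whose denominator carries the factors $\Phi_m(q)$ for proper divisors $m\mid n$, and nothing in the microscope forces these to cancel. That is exactly why the paper can prove Theorem~\ref{main-2} (modulus $[n]\Phi_n(q)$) but must leave the modulus $[n]^2$ as Conjecture~1.3. Your ``self-similar blocking reduction modulo $\Phi_m(q)^2$'' is the natural thing to try, but you have not carried it out, and the cross terms you mention are precisely the unresolved difficulty.
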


We shall also give some similar results, such as
\begin{thm}\label{thm-1}
Let $n>1$ be a positive integer. Then
\begin{align}
\sum_{k=0}^{n-1}\frac{(q,q,q^4;q^6)_k q^{6k}}{(q^6;q^6)_k^3}
&\equiv 0 \pmod{\Phi_n(q)^2}\quad\text{if $n\equiv 5\pmod{6}$,} \label{main-6-5}\\[5pt]
\sum_{k=0}^{n-1}\frac{(q^{-1}, q^{-1},q^{-4};q^6)_k q^{6k}}{(q^6;q^6)_k^3}
&\equiv 0 \pmod{\Phi_n(q)^2} \quad\text{if $n\equiv 1\pmod{6}$}.  \label{main-6-1}
\end{align}
\end{thm}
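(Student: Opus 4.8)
The plan is to prove both congruences by \emph{creative microscoping}, the parameter-deformation method of Guo and Zudilin, since each summand is a ${}_3\phi_2$-type series with base $q^6$ whose numerator parameters $q,q,q^4$ (respectively $q^{-1},q^{-1},q^{-4}$) have exponents summing to $6$ (respectively $-6$). First I would record the elementary observation that, for $0\leqslant k\leqslant n-1$, the denominator $(q^6;q^6)_k^3$ is coprime to $\Phi_n(q)$: since $n\equiv\pm1\pmod6$ gives $\gcd(n,6)=1$, we have $n\mid 6j$ only when $n\mid j$, which is impossible for $1\leqslant j\leqslant k\leqslant n-1$. Thus every divisibility question reduces to the numerator. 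A short computation with $n=6m+5$ shows that $(q^4;q^6)_k\equiv0\pmod{\Phi_n(q)}$ already for $k\geqslant 2m+2$, so modulo $\Phi_n(q)$ the sum in \eqref{main-6-5} truncates well before $k=n-1$; this is the shadow of an underlying terminating summation.

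To capture the \emph{second} factor of $\Phi_n(q)$ I would introduce a parameter $a$ and define a deformed sum $S_n(a)$ that reduces to the left-hand side of \eqref{main-6-5} at $a=1$, obtained by splitting the repeated factor $(q;q^6)_k^2$ into a reciprocal pair $(aq;q^6)_k(q/a;q^6)_k$ and making the corresponding adjustment of the denominator factors that preserves a well-poised shape. The heart of the argument is to prove
\[
S_n(a)\equiv 0 \pmod{(1-aq^n)(a-q^n)}.
\]
I would establish this by the two specializations $a=q^{-n}$ and $a=q^{n}$, which annihilate the factors $1-aq^n$ and $a-q^n$ respectively. The key structural point is that $n\equiv-1\pmod6$ makes one deformed numerator parameter collapse onto a power of the base: $q/a=q^{1+n}=q^{6(m+1)}$ at $a=q^{-n}$, and symmetrically $aq=q^{1+n}$ at $a=q^{n}$. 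Hence each $S_n(q^{\pm n})$ becomes a genuinely terminating, balanced (very-well-poised) series, which can be summed in closed form by a classical $q$-summation — the $q$-Pfaff–Saalschütz identity, or more likely here Rogers' very-well-poised ${}_6\phi_5$ sum (equivalently the $q$-Dixon sum) — and the resulting product is seen to carry an explicit vanishing factor, giving $S_n(q^{\pm n})=0$.

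Granting this, the linear polynomials $1-aq^n$ and $a-q^n$ in $a$ are coprime over $\mathbb{Q}(q)$, so $S_n(a)$ is divisible by their product; since the deformed denominator remains coprime to $\Phi_n(q)$, specializing $a=1$ yields divisibility of the original sum by $(1-q^n)^2$, and hence by $\Phi_n(q)^2$, which is \eqref{main-6-5}. For \eqref{main-6-1} I would run the entirely parallel argument with $n=6m+1\equiv 1\pmod6$: the negated parameters $q^{-1},q^{-1},q^{-4}$ play the same role, and the collapse $q^{-1}q^{n}=q^{6m}$ (a power of the base) again produces a summable configuration at the appropriate specialization. Alternatively, one may try to deduce \eqref{main-6-1} from \eqref{main-6-5} by the substitution $q\mapsto q^{-1}$ together with a re-indexing of the sum, using that $\Phi_n(q^{-1})$ equals $\Phi_n(q)$ up to a power of $q$.

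The main obstacle I anticipate lies in the second paragraph: choosing the \emph{correct} $a$-deformation. It must simultaneously restore the target sum at $a=1$ and, at $a=q^{\pm n}$, degenerate into a configuration to which a known closed-form $q$-summation applies with a manifestly zero value. This balancing act is precisely what the arithmetic relation $1+1+4=6$ (together with $n\equiv-1\pmod6$) makes possible, and pinning down the exact summation formula and verifying that its evaluated product indeed vanishes is the one genuinely delicate computation of the proof.
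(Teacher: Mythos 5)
Your overall strategy is exactly the paper's: introduce a parameter $a$, prove a congruence modulo $(1-aq^n)(a-q^n)$ by showing the deformed sum vanishes identically at the two specializations $a=q^{\pm n}$, then let $a\to1$ after checking the denominators stay coprime to $\Phi_n(q)$. But the concrete deformation you commit to --- splitting $(q;q^6)_k^2$ as $(aq;q^6)_k(q/a;q^6)_k$ --- fails, and the failure is structural, not cosmetic. At $a=q^{-n}$ with $n=6m+5$ the numerator parameters become $q^{1-n},q^{1+n},q^4$, with exponents $\equiv 2,0,4\pmod 6$ and $1+n>0$; none has the form $q^{-6N}$, so the specialized series does \emph{not} terminate. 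Your ``key structural point,'' the collapse $q/a=q^{1+n}=q^{6(m+1)}$ onto a \emph{positive} power of the base, produces no termination at all: $(q^{6(m+1)};q^6)_k$ never vanishes. Consequently your $S_n(q^{\pm n})$ is a partial sum of a non-terminating (albeit balanced) series, and neither $q$-Pfaff--Saalsch\"utz nor Rogers' ${}_6\phi_5$ applies --- those evaluate complete terminating series, not partial sums. The missing idea is the choice of \emph{higher} powers of $a$: the paper deforms to $(a^5q,\,q/a^5,\,q^4;q^6)_k$ over $(a^4q^6,\,q^6/a^4,\,q^6;q^6)_k$ (see \eqref{eq:6-5}), chosen precisely so that at $a=q^{\mp n}$ one numerator parameter becomes $q^{1-5n}$, where $1-5n\equiv0\pmod 6$ and is negative; this is $(q^{-6N};q^6)_k$ with $N=(5n-1)/6\leqslant n-1$, a genuine termination.

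Given the correct deformation, the paper's vanishing mechanism is also more elementary than the one you invoke: it writes $(q^{1-5n};q^6)_kq^{6k}/(q^6;q^6)_k$ as $(-1)^k{N\brack k}_{q^6}q^{6{N-k\choose 2}}$ times a constant, observes that the remaining ratios $(q^{5n+1};q^6)_k/(q^{4n+6};q^6)_k$ and $(q^4;q^6)_k/(q^{6-4n};q^6)_k$ are polynomials in $q^{6k}$ of degrees $(n-5)/6$ and $(2n-1)/3$, of total degree $(5n-7)/6<N$, and kills the sum with the finite $q$-binomial theorem \eqref{eq:qbino}. (Your summation-formula instinct is salvageable after the fix: the correctly specialized sum is a terminating balanced ${}_3\phi_2$ in base $q^6$, and $q$-Saalsch\"utz evaluates it to a product whose factor $(q^{2-4n};q^6)_N$ contains $1-q^{2-4n+6(2n-1)/3}=0$ for $n\geqslant5$, while the denominator factors are nonzero since $\gcd(n,6)=1$; so that route also closes, but only with the $a^5/a^4$ deformation.) Finally, your fallback for \eqref{main-6-1} is a dead end: using $(x;q^{-6})_k=(-x)^kq^{-6{k\choose 2}}(x^{-1};q^6)_k$ one checks that the summand of \eqref{main-6-5} is \emph{exactly invariant} under $q\mapsto q^{-1}$, so the substitution returns \eqref{main-6-5} rather than \eqref{main-6-1}, and in any case no substitution in $q$ can move $n$ from the class $5$ to the class $1$ modulo $6$. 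The paper instead proves \eqref{main-6-1} by the parallel deformation $(a^5/q,\,q^{-1}/a^5,\,q^{-4};q^6)_k$ over $(a^4q^6,\,q^6/a^4,\,q^6;q^6)_k$.
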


We shall prove Theorems \ref{main-1} and \ref{thm-1}  by using the {\it creative microscoping} method developed by the author and Zudilin \cite{GuoZu}. That is to say, to prove
a $q$-supercongruence modulo $\Phi_n(q)^2$, it is more convenient to establish its generalization with an additional parameter $a$ so that the generalized congruence
holds modulo $(1-aq^n)(a-q^n)$. The difference here is that we shall add the parameter $a$ in quite a different way for the proof of Theorem \ref{main-1}.
The proof of Theorem \ref{main-2} is based on Theorem \ref{main-1} and borrows some idea from \cite{GuoZu} for proving congruences modulo $[n]$.
We shall give more similar congruences modulo $\Phi_n(q)^2$ in Section 5 and propose some related open problems in the last section.

\section{Proof of Theorem \ref{main-1}}

We first establish the following parametric generalization of Theorem \ref{main-1}.
\begin{thm}
Let $d,r,n$ be given as in the conditions of Theorem \ref{main-1}. Then, modulo $(1-aq^n)(a-q^n)$,
\begin{align}
\sum_{k=0}^{n-1}\frac{(a^{d-1}q^r, a^{d-3}q^r,\ldots, a^2q^r;q^d)_k  (a^{1-d}q^r, a^{3-d}q^r,\ldots, a^{-2}q^r;q^d)_k (q^r;q^d)_k q^{dk}}
{(a^{d-2}q^d, a^{d-4}q^d,\ldots,aq^d;q^d)_k  (a^{2-d}q^d, a^{4-d}q^d,\ldots, a^{-1}q^d;q^d)_k (q^d;q^d)_k } \equiv 0\label{eq:a-1}
\end{align}
if $d$ is odd, and
\begin{align}
\sum_{k=0}^{n-1}\frac{(a^{d-1}q^r, a^{d-3}q^r,\ldots, aq^r;q^d)_k  (a^{1-d}q^r, a^{3-d}q^r,\ldots, a^{-1}q^r;q^d)_k  q^{dk}}
{(a^{d-2}q^d, a^{d-4}q^d,\ldots,q^d;q^d)_k  (a^{2-d}q^d, a^{4-d}q^d,\ldots, q^d;q^d)_k } \equiv 0\label{eq:a-e}
\end{align}
if $d$ is even.
\end{thm}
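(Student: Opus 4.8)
The plan is to prove the parametric congruences \eqref{eq:a-1} and \eqref{eq:a-e} by the creative microscoping method, exploiting a hidden $a\leftrightarrow a^{-1}$ symmetry to halve the work. In either case write $F_k(a)$ for the $k$-th summand and $S(a)=\sum_{k=0}^{n-1}F_k(a)$. Since $1-aq^n$ and $a-q^n$ are coprime polynomials in $a$ over $\mathbb{Q}(q)$, it suffices to show that $S(a)$ vanishes after the two specializations $a=q^{-n}$ and $a=q^{n}$; equivalently, that $(1-aq^n)(a-q^n)$ divides the numerator of the rational function $S(a)$. A preliminary step is to check that none of the denominator $q^d$-shifted factorials vanish at $a=q^{\pm n}$ for $0\le k\le n-1$, so that $S(q^{\pm n})$ is a well-defined finite sum and the divisibility makes sense; this is where the hypotheses $\gcd(r,d)=1$ and $n\ge d-r$ enter.

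Next I would record the symmetry of the summand. Replacing $a$ by $a^{-1}$ permutes the numerator parameters $a^{d-1}q^r,a^{d-3}q^r,\dots$ among themselves (the factor $a^{j}q^r$ trading places with $a^{-j}q^r$, and in the odd case the central $q^r$ being fixed), and likewise permutes the denominator parameters; the remaining factor $q^{dk}$ is free of $a$. Hence $F_k(a)=F_k(a^{-1})$ for every $k$, so $S(a)=S(a^{-1})$. Consequently, once I establish $S(q^{-n})=0$, the symmetry immediately gives $S(q^{n})=S(q^{-n})=0$, and both required divisibilities follow at once. This reduces the whole problem to a single evaluation at $a=q^{-n}$.

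The heart of the matter is then to evaluate $S(q^{-n})$ in closed form. After setting $a=q^{-n}$ the parameters collapse and the sum becomes a terminating basic hypergeometric series in base $q^{d}$ of summable type---balanced precisely when $r=d-2$, and otherwise reducible to a summable form by a well-poised or Karlsson--Minton-type transformation---to which a classical summation (the terminating $q$-Chu--Vandermonde or $q$-Pfaff--Saalsch\"utz identity in the simplest cases, a suitable higher summation in general) applies. The whole point of inserting $a$ in this particular, somewhat unusual fashion is precisely that $a=q^{-n}$ turns the series into one of these summable shapes. The resulting closed form is a quotient of $q^{d}$-shifted factorials, and from it I would read off a single zero factor: the congruence $n\equiv-r\pmod d$ together with $n\ge d-r$ forces one numerator $q^{d}$-factorial in the product to contain a vanishing term, whence $S(q^{-n})=0$. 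I expect this matching step to be the main obstacle---identifying the correct summation formula for each parity of $d$ (the odd pattern \eqref{eq:a-1} and the even pattern \eqref{eq:a-e} differing slightly in how the parameters pair up), verifying rigorously that the specialized series is genuinely of summable type, and confirming that the evaluated product vanishes under exactly the stated congruence and size conditions on $n$.
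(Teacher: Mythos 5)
Your reduction steps are sound and agree with the paper: the summand is invariant under $a\mapsto a^{-1}$ (the parameters pair off as $a^{j}q^r\leftrightarrow a^{-j}q^r$), the hypotheses $\gcd(r,d)=1$ and $n\equiv -r\pmod d$ give $\gcd(d,n)=1$ so no denominator factor vanishes at $a=q^{\pm n}$, and it suffices to check $S(q^{-n})=0$. But the heart of your argument --- evaluating $S(q^{-n})$ by ``a classical summation'' --- is a genuine gap, and your specific guesses point the wrong way. After specialization the series has $2d-1$ numerator parameters $q^{r-(d-1)n},\dots,q^{r-2n},q^{(d-1)n+r},\dots,q^{2n+r},q^{r}$ in base $q^d$; it is not generically balanced or well-poised, no $q$-Chu--Vandermonde or $q$-Pfaff--Saalsch\"utz template fits, and ``reducible by a transformation'' is exactly the unproved step. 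You also leave unexplained where the hypothesis $r\leqslant d-2$ enters, which is a sign the mechanism has not been identified.

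The paper's actual mechanism is elementary and avoids summation theorems altogether. The factor $(q^{r-(d-1)n};q^d)_k$ truncates the sum at $M=(dn-n-r)/d\leqslant n-1$, and this factor combined with $q^{dk}/(q^d;q^d)_k$ is $(-1)^k{M\brack k}_{q^d}$ times an explicit power of $q$. The remaining $d-1$ parameters are paired numerator-over-denominator so that each ratio, e.g.\ $(q^{r-(d-3)n};q^d)_k/(q^{d-(d-2)n};q^d)_k$, telescopes into a quotient of $q^d$-factorials of length $(n+r-d)/d$ --- an integer precisely because $n\equiv-r\pmod d$ and nonnegative because $n\geqslant d-r$ --- hence is a \emph{polynomial in $q^{dk}$} of degree $(n+r-d)/d$. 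The product of all pairs is a polynomial $P(q^{dk})$ of degree $(d-1)(n+r-d)/d=M-(d-r-1)\leqslant M-1$, where the strict inequality is exactly where $r\leqslant d-2$ is used. The finite $q$-binomial theorem in the form $\sum_{k=0}^{M}(-1)^k{M\brack k}q^{{M-k\choose 2}+jk}=0$ for $0\leqslant j\leqslant M-1$ then annihilates every monomial of $P$, giving $S(q^{\pm n})=0$ identically. Your instinct toward Karlsson--Minton-type structure is directionally related (the paired parameters do differ by integral powers of $q^{dn}$ relative to $q^d$), but to repair your proof you would need to replace the appeal to an unidentified summation formula by this pairing-plus-low-degree-polynomial argument, or else actually locate and verify a $q$-Karlsson--Minton summation covering all $d$, $r$, which is the entire difficulty.
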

\begin{proof} Since $\gcd(r,d)=1$ and $n\equiv -r\pmod{d}$, we have $\gcd(d,n)=1$ and so the numbers $d,2d,\ldots (n-1)d$ are all not
divisible by $n$. This means that the denominators of the left-hand sides of \eqref{eq:a-1} and \eqref{eq:a-e} do not contain
the factor $1-aq^n$ nor $1-a^{-1}q^n$.
Hence, for $a=q^{-n}$ or $a=q^n$, the left-hand side of \eqref{eq:a-1} can be written as
\begin{align}
\sum_{k=0}^{\frac{dn-n-r}{d}}\frac{(q^{r-(d-1)n}, q^{r-(d-3)n},\ldots, q^{r-2n};q^d)_k  (q^{(d-1)n+r}, q^{(d-3)n+r},\ldots, q^{2n+r};q^d)_k (q^r;q^d)_k q^{dk}}
{(q^{d-(d-2)n}, q^{d-(d-4)n},\ldots, q^{d-n};q^d)_k  (q^{(d-2)n+d}, q^{(d-4)n+d},\ldots, q^{n+d};q^d)_k (q^d;q^d)_k },\label{eq:a-2}
\end{align}
where we have used the fact that $(q^{r-(d-1)n};q^d)_k=0$ for $k>(dn-n-r)/d$, and by the conditions there holds $0<(dn-n-r)/d\leqslant n-1$.

Let
\begin{equation*}
{n\brack k}={n\brack k}_q=
\frac{(q;q)_n}{(q;q)_k(q;q)_{n-k}}
\end{equation*}
be the {\it $q$-binomial coefficient}. It is easy to see that
\begin{align}
\frac{(q^{r-(d-1)n};q^d)_k q^{dk}}{(q^d;q^d)_k} &=(-1)^k {(dn-n-r)/d\brack k}_{q^d} q^{d{k\choose 2}+(n+r-dn+d)k},\label{qdk-0}\\[5pt]
\frac{(q^{r-(d-3)n};q^d)_k}{(q^{d-(d-2)n};q^d)_k} &=\frac{(q^{d-(d-2)n+dk};q^d)_{(n+r-d)/d}}{(q^{d-(d-2)n};q^d)_{(n+r-d)/d}},\label{qdk-begin}\\[5pt]
\frac{(q^{r-(d-5)n};q^d)_k}{(q^{d-(d-4)n};q^d)_k} &=\frac{(q^{d-(d-4)n+dk};q^d)_{(n+r-d)/d}}{(q^{d-(d-4)n};q^d)_{(n+r-d)/d}},\\[5pt]
&\ \  \vdots \notag \\[5pt]
\frac{(q^{r-2n};q^d)_k}{(q^{d-3n};q^d)_k} &=\frac{(q^{d-3n+dk};q^d)_{(n+r-d)/d}}{(q^{d-3n};q^d)_{(n+r-d)/d}},
\end{align}
and
\begin{align}
\frac{(q^{(d-1)n+r};q^d)_k}{(q^{(d-2)n+d};q^d)_k} &=\frac{(q^{(d-2)n+dk+d};q^d)_{(n+r-d)/d}}{(q^{(d-2)n+d};q^d)_{(n+r-d)/d}},\\[5pt]
\frac{(q^{(d-3)n+r};q^d)_k}{(q^{(d-4)n+d};q^d)_k} &=\frac{(q^{(d-4)n+dk+d};q^d)_{(n+r-d)/d}}{(q^{(d-4)n+d};q^d)_{(n+r-d)/d}},\\[5pt]
&\ \  \vdots \notag \\[5pt]
\frac{(q^{2n+r};q^d)_k}{(q^{n+d};q^d)_k} &=\frac{(q^{2n+dk+d};q^d)_{(n+r-d)/d}}{(q^{n+d};q^d)_{(n+r-d)/d}},\\[5pt]
\frac{(q^r;q^d)_k}{(q^{d-n};q^d)_k} &=\frac{(q^{d-n+dk};q^d)_{(n+r-d)/d} }{(q^{d-n};q^d)_{(n+r-d)/d} }.  \label{qdk-end}
\end{align}
Noticing that the right-hand sides of \eqref{qdk-begin}--\eqref{qdk-end} are all polynomials in $q^{dk}$ of degree $(n+r-d)/d$, and
$$
d{k\choose 2}+(n+r-dn+d)k=d{(dn-n-r)/d-k\choose 2}-d{(dn-n-r)/d\choose 2},
$$
we can write \eqref{eq:a-2} as
\begin{align}
\sum_{k=0}^{\frac{dn-n-r}{d}}(-1)^k q^{d{(dn-n-r)/d-k\choose 2}} {(dn-n-r)/d\brack k}_{q^d}P(q^{dk}), \label{eq:a-p}
\end{align}
where $P(q^{dk})$ is a polynomial in $q^{dk}$ of degree $(n+r-d)(d-1)/d=(dn-n-r)/d-(d-r-1)\leqslant (dn-n-r)/d-1$.

Recall that the finite form of the $q$-binomial theorem (see, for example, \cite[p. 36]{Andrews}) can be written as
\begin{align*}
\sum_{k=0}^{n}(-1)^k {n\brack k}q^{k\choose 2} z^k=(z;q)_n.
\end{align*}
Letting $z=q^{-j}$ and replacing $k$ with $n-k$ in the above equation, we obtain
\begin{align}
\sum_{k=0}^{n}(-1)^k {n\brack k}q^{{n-k\choose 2}+jk}=0\quad\text{for}\ 0\leqslant j\leqslant n-1.  \label{eq:qbino}
\end{align}
This immediately implies that $\eqref{eq:a-2}=\eqref{eq:a-p}=0$. Namely, the congruence \eqref{eq:a-1} holds.

Along the same lines, we can prove the congruence \eqref{eq:a-e}.
\end{proof}

\begin{proof}[Proof of Theorem {\rm\ref{main-1}}]Note that $\Phi_n(q)$ is a factor of $1-q^m$ if and only if $n$ divides $m$.
It follows that the limits of the denominators of \eqref{eq:a-1} and \eqref{eq:a-e} as $a\to1$ are relatively prime to $\Phi_n(q)$,
since $n$ is coprime with $d$.
On the other hand, the limit of $(1-aq^n)(a-q^n)$ as $a\to1$ has the factor $\Phi_n(q)^2$.
Thus, the congruence \eqref{eq:main} follows from the limiting case $a\to1$ of \eqref{eq:a-1} and \eqref{eq:a-e}.
\end{proof}

\section{Proof of Theorem \ref{main-2}}
Letting $d=2$ and $r=-1$ in \eqref{eq:main}, we see that, for odd $n>1$,
\begin{align}
\sum_{k=0}^{n-1}\frac{(q^{-1};q^2)_k^2 q^{2k}}{(q^2;q^2)_k^2}     &\equiv 0 \pmod{\Phi_n(q)^2}, \label{eq:main-2-2} \\[5pt]
\sum_{k=0}^{(n+1)/2}\frac{(q^{-1};q^2)_k^2 q^{2k}}{(q^2;q^2)_k^2} &\equiv 0 \pmod{\Phi_n(q)^2}, \label{eq:main-3-2}
\end{align}
because $(q^{-1};q^2)_k\equiv 0\pmod{\Phi_n(q)}$ for $(n+1)/2<k\leqslant n-1$.
We now let $\zeta\ne1$ be an $n$-th root of unity, not necessarily primitive.
In other words, $\zeta$ is a primitive root of unity of odd degree $d\mid n$.  If $c_q(k)$ denotes the $k$-th term on the left-hand side of \eqref{eq:main-2-2},
i.e.,
$$
c_q(k)=\frac{(q^{-1};q^2)_k^2 q^{2k}}{(q^2;q^2)_k^2}.
$$
The congruences \eqref{eq:main-2-2} and \eqref{eq:main-3-2}  with $n=d$ imply that
\begin{align*}
\sum_{k=0}^{(d+1)/2}c_\zeta(k)=\sum_{k=0}^{d-1}c_\zeta(k)=0.
\end{align*}
Observe that
$$
\frac{c_\zeta(\ell d+k)}{c_\zeta(\ell d)}
=\lim_{q\to\zeta}\frac{c_q(\ell d+k)}{c_q(\ell d)}
=c_\zeta(k).
$$
We get
$$
\sum_{k=0}^{n-1}c_\zeta(k)=\sum_{\ell=0}^{n/d-1}\sum_{k=0}^{d-1}c_\zeta(\ell d+k)
=\sum_{\ell=0}^{n/d-1}c_\zeta(\ell d) \sum_{k=0}^{d-1}c_\zeta(k)=0,
$$
and
$$
\sum_{k=0}^{(n+1)/2}c_\zeta(k)
=\sum_{\ell=0}^{(n/d-3)/2} c_\zeta(\ell d)\sum_{k=0}^{d-1}c_\zeta(k)+\sum_{k=0}^{(d+1)/2}c_\zeta((n-d)/2+k)=0,
$$
which mean that the sums $\sum_{k=0}^{n-1}c_q(k)$ and $\sum_{k=0}^{(n+1)/2}c_q(k)$ are both divisible
by the cyclotomic polynomial $\Phi_d(q)$.  As this is true for arbitrary divisor $d>1$ of $n$, we conclude that these two sums are both divisible by
\begin{equation*}
\prod_{\substack{d\mid n,\, d>1}}\Phi_d(q)=[n].
\end{equation*}
Namely, the congruences \eqref{eq:main-2-2} and \eqref{eq:main-3-2} are also true modulo $[n]$. The proof then follows from $\gcd([n],\Phi_n(q)^2)=[n]\Phi_n(q)$.
\qed

\section{Proof of Theorem \ref{thm-1}}
The proof is similar to that of Theorem \ref{main-1}.
We first prove the following result.
\begin{align}
\sum_{k=0}^{n-1}\frac{(a^5q, q/a^5, q^4;q^6)_k q^{6k}}{(a^4q^6, q^6/a^4, q^6;q^6)_k}
\equiv 0 \pmod{(1-aq^n)(a-q^n)}\quad\text{if $n\equiv 5\pmod{6}$}.  \label{eq:6-5}
\end{align}
The $r=1$ and $d=6$ case of \eqref{qdk-0} gives
\begin{align*}
\frac{(q^{1-5n};q^6)_k q^{6k}}{(q^6;q^6)_k} &=(-1)^k {(5n-1)/6\brack k}_{q^6} q^{6{(5n-1)/6-k\choose 2}-6{(5n-1)/6\choose 2} }.
\end{align*}
Moreover, we have
\begin{align*}
\frac{(q^{5n+1};q^6)_k}{(q^{4n+6};q^6)_k} &=\frac{(q^{4n+6k+6};q^6)_{(n-5)/6}}{(q^{4n+6};q^6)_{(n-5)/6}},\\[5pt]
\frac{(q^4;q^6)_k}{(q^{6-4n};q^6)_k} &=\frac{(q^{6-4n+6k};q^6)_{(2n-1)/3}}{(q^{6-4n};q^6)_{(2n-1)/3}}.
\end{align*}
It follows that
\begin{align}
&\sum_{k=0}^{n-1}\frac{(q^{1-5n}, q^{5n+1}, q^4;q^6)_k q^{6k}}{(q^{6-4n}, q^{4n+6},  q^6;q^6)_k} \notag\\[5pt]
&\quad=\sum_{k=0}^{n-1}(-1)^k {(5n-1)/6\brack k}_{q^6} q^{6{(5n-1)/6-k\choose 2}-6{(5n-1)/6\choose 2}} \frac{(q^{4n+6k+6};q^6)_{(n-5)/6}(q^{6-4n+6k};q^6)_{(2n-1)/3} }
{(q^{4n+6};q^6)_{(n-5)/6}(q^{6-4n};q^6)_{(2n-1)/3} }.  \label{eq:sum}
\end{align}
Since $(q^{4n+6k+6};q^6)_{(n-5)/6}(q^{6-4n+6k};q^6)_{(2n-1)/3}$ is a polynomial in $q^{6k}$ of degree $(5n-7)/6<(5n-1)/6$, by the identity \eqref{eq:qbino},
we see that the right-hand side of \eqref{eq:sum} vanishes. This proves that the left-hand side of  \eqref{eq:6-5} is equal to $0$ for $a=q^{-n}$ or $a=q^n$.
That is, the congruence \eqref{eq:6-5} holds. Finally, letting $a\to 1$ in \eqref{eq:6-5}, we are led to \eqref{main-6-5}.

Similarly we can prove \eqref{main-6-1}. Here we merely give its parametric generalization:
 \begin{align*}
\sum_{k=0}^{n-1}\frac{(a^5/q,  q^{-1}/a^5, q^{-4};q^6)_k q^{6k}}{(a^4q^6,  q^6/a^4, q^6;q^6)_k}
\equiv 0 \pmod{(1-aq^n)(a-q^n)}\quad\text{if $n\equiv 1\pmod{6}$}.
\end{align*}

\section{More congruences modulo $\Phi_n(q)^2$}
It seems that there are many more similar congruences modulo $\Phi_n(q)^2$. Here we give some such results.

\begin{thm}
Let $n$ be a positive integer. Then
\begin{align*}
\sum_{k=0}^{n-1}\frac{(q,q,q^7;q^9)_k q^{9k}}{(q^9;q^9)_k^3}
&\equiv 0 \pmod{\Phi_n(q)^2}\quad\text{if $n\equiv 2,8\pmod{9}$,} \\[5pt]
\sum_{k=0}^{n-1}\frac{(q^2,q^2,q^5;q^9)_k q^{9k}}{(q^9;q^9)_k^3}
&\equiv 0 \pmod{\Phi_n(q)^2}\quad\text{if $n\equiv 4,7\pmod{9}$,} \\[5pt]
\sum_{k=0}^{n-1}\frac{(q^4,q^4,q;q^9)_k q^{9k}}{(q^9;q^9)_k^3}
&\equiv 0 \pmod{\Phi_n(q)^2}\quad\text{if $n\equiv 5,8\pmod{9}$.}
\end{align*}
\end{thm}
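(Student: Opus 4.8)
The plan is to prove all three congruences by the creative-microscoping argument of Section~4, handling the two admissible residues in each line separately. Write $r\in\{1,2,4\}$ for the repeated numerator exponent, so that the third exponent is $9-2r$ and the two residues recorded in each line are precisely $n\equiv 2r$ and $n\equiv -r\pmod 9$. For such an $n$ I would introduce the parametric series
\[
S(a)=\sum_{k=0}^{n-1}\frac{(a^{s}q^{r},a^{-s}q^{r},q^{9-2r};q^{9})_k\,q^{9k}}{(a^{t}q^{9},a^{-t}q^{9},q^{9};q^{9})_k},
\]
whose $a\to1$ limit is exactly the sum in the statement, and claim that $S(a)\equiv 0\pmod{(1-aq^n)(a-q^n)}$. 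The exponents are forced by the two requirements $sn\equiv r$ and $tn\equiv 2r\pmod 9$; solving these (and taking representatives in $\{1,\dots,8\}$) gives $(s,t)\equiv(-1,-2)\equiv(8,7)$ when $n\equiv -r$, the analogue of the $(d-1,d-2)$ choice made for Theorem~\ref{thm-1}, and $(s,t)\equiv(2^{-1},1)\equiv(5,1)$ when $n\equiv 2r$.

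To prove the parametric congruence I would imitate Section~4 line by line. Since $\gcd(r,9)=1$ forces $\gcd(n,9)=1$, the denominators of $S(a)$ contain neither $1-aq^n$ nor $1-a^{-1}q^n$; moreover $S(a)$ is invariant under $a\mapsto a^{-1}$, so it suffices to show $S(q^{-n})=0$. At $a=q^{-n}$ the factor $a^{s}q^{r}=q^{r-sn}$ forces termination at $k=K:=(sn-r)/9$, a nonnegative integer with $K\leqslant n-1$ for the chosen $s$. Exactly as in \eqref{qdk-0}, the quotient $(q^{r-sn};q^9)_k\,q^{9k}/(q^9;q^9)_k$ becomes $(-1)^k{K\brack k}_{q^9}q^{9\binom{K-k}{2}-9\binom{K}{2}}$, while the two surviving quotients
\[
\frac{(q^{r+sn};q^9)_k}{(q^{9+tn};q^9)_k}\qquad\text{and}\qquad\frac{(q^{9-2r};q^9)_k}{(q^{9-tn};q^9)_k}
\]
turn, just as in \eqref{qdk-begin}--\eqref{qdk-end}, into polynomials in $q^{9k}$ of degrees $D_1=((s-t)n+r-9)/9$ and $D_2=(tn-2r)/9$; here $\gcd(n,9)=1$ is precisely what guarantees that the constant denominators $(q^{9+tn};q^9)_{D_1}$ and $(q^{9-tn};q^9)_{D_2}$ never vanish.

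The crux is the degree bookkeeping. The congruences $sn\equiv r$ and $tn\equiv 2r\pmod 9$ make $D_1,D_2$ nonnegative integers, and they telescope to
\[
D_1+D_2=\frac{(s-t)n+r-9+tn-2r}{9}=\frac{sn-r}{9}-1=K-1 .
\]
Hence the polynomial factor has degree at most $K-1$, and \eqref{eq:qbino}, with $q$ replaced by $q^9$ and $n$ by $K$, annihilates the sum term by term; this yields $S(q^{-n})=0$ and therefore the parametric congruence. Letting $a\to1$ then finishes the proof exactly as in the proof of Theorem~\ref{main-1}: the limiting denominators are coprime to $\Phi_n(q)$ because $\gcd(n,9)=1$, whereas $(1-aq^n)(a-q^n)\to(1-q^n)^2$ is divisible by $\Phi_n(q)^2$.

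The step I expect to need genuine care is the verification, for each of the six pairs $(r,\,n\bmod 9)$, that the single choice $(s,t)=(8,7)$ or $(5,1)$ simultaneously keeps $K\leqslant n-1$ (so that terminating at $k=n-1$ really captures the vanishing), keeps $D_1$ and $D_2$ nonnegative, and keeps the auxiliary denominators nonzero; once the identity $D_1+D_2=K-1$ is in hand, everything else is a transcription of Section~4. I would also remark that the very same argument, now with $(s,t)=(7,5)$, disposes of the remaining residue $n\equiv 4r\pmod 9$ in each of the three lines, so that a slightly stronger statement is in fact available.
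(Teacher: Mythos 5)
Your proposal is correct and takes essentially the same route as the paper: the parametric families you construct, with $(s,t)=(5,1)$ for $n\equiv 2r\pmod 9$ and $(s,t)=(8,7)$ for $n\equiv -r\pmod 9$, are exactly the parametric generalizations displayed in the paper's proof, and your degree bookkeeping $D_1+D_2=K-1$ followed by an appeal to \eqref{eq:qbino} is precisely the Section~4 mechanism the paper invokes. Your closing remark that $(s,t)=(7,5)$ handles the additional residue $n\equiv 4r\pmod 9$ goes slightly beyond what the paper records, and the same verifications ($K\leqslant n-1$, $D_1,D_2\geqslant 0$, nonvanishing denominators since $9\nmid tn$) do check out there as well.
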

\begin{proof}The proof is similar to that of Theorem \ref{thm-1}. Here we just give the parametric generalizations of
these congruences. Modulo $(1-aq^n)(a-q^n)$, for $r=1,2,4$, we have
\begin{align*}
\sum_{k=0}^{n-1}\frac{(a^5q^r,q^r/a^5,q^{9-2r};q^9)_k q^{9k}}{(aq^9,q^9/a,q^9;q^9)_k}
&\equiv 0 \quad\text{if $n\equiv 2r\pmod{9}$,} \\[5pt]
\sum_{k=0}^{n-1}\frac{(a^8q^r,q^r/a^8,q^{9-2r};q^9)_k q^{9k}}{(a^7q^9,q^9/a^7,q^9;q^9)_k}
&\equiv 0 \quad\text{if $n\equiv -r\pmod{9}$.}
\end{align*}
\end{proof}

\begin{thm}
Let $n>9$ be a positive integer. Then
\begin{align*}
\sum_{k=0}^{n-1}\frac{(q^{-1},q^{-1},q^{-7};q^9)_k q^{9k}}{(q^9;q^9)_k^3}
&\equiv 0 \pmod{\Phi_n(q)^2}\quad\text{if $n\equiv 5\pmod{9}$,} \\[5pt]
\sum_{k=0}^{n-1}\frac{(q^{-2},q^{-2},q^{-5};q^9)_k q^{9k}}{(q^9;q^9)_k^3}
&\equiv 0 \pmod{\Phi_n(q)^2}\quad\text{if $n\equiv 2,5\pmod{9}$,}\\[5pt]
\sum_{k=0}^{n-1}\frac{(q^{-4},q^{-4},q^{-1};q^9)_k q^{9k}}{(q^9;q^9)_k^3}
&\equiv 0 \pmod{\Phi_n(q)^2}\quad\text{if $n\equiv 2\pmod{9}$.}
\end{align*}
\end{thm}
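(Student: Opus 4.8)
The plan is to prove all three congruences by the creative microscoping method, just as in the proofs of Theorems~\ref{main-1} and~\ref{thm-1}: for each I would insert an extra parameter $a$ in an $a\mapsto a^{-1}$ symmetric way, establish the resulting parametric congruence modulo $(1-aq^n)(a-q^n)$, and then let $a\to1$. Concretely, I would prove that, modulo $(1-aq^n)(a-q^n)$,
\begin{align*}
\sum_{k=0}^{n-1}\frac{(a^7q^{-1},q^{-1}/a^7,q^{-7};q^9)_k\,q^{9k}}{(a^5q^9,q^9/a^5,q^9;q^9)_k}
&\equiv 0\quad\text{if $n\equiv 5\pmod9$,}\\[5pt]
\sum_{k=0}^{n-1}\frac{(a^5q^{-2},q^{-2}/a^5,q^{-5};q^9)_k\,q^{9k}}{(aq^9,q^9/a,q^9;q^9)_k}
&\equiv 0\quad\text{if $n\equiv 5\pmod9$,}\\[5pt]
\sum_{k=0}^{n-1}\frac{(a^8q^{-2},q^{-2}/a^8,q^{-5};q^9)_k\,q^{9k}}{(a^7q^9,q^9/a^7,q^9;q^9)_k}
&\equiv 0\quad\text{if $n\equiv 2\pmod9$,}\\[5pt]
\sum_{k=0}^{n-1}\frac{(a^7q^{-4},q^{-4}/a^7,q^{-1};q^9)_k\,q^{9k}}{(a^5q^9,q^9/a^5,q^9;q^9)_k}
&\equiv 0\quad\text{if $n\equiv 2\pmod9$.}
\end{align*}
Each summand is invariant under $a\mapsto a^{-1}$ and, at $a=1$, collapses to the corresponding summand of the theorem; thus these four statements cover the three congruences, the second and third lines furnishing the residues $n\equiv5$ and $n\equiv2$ of the middle congruence.

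Since $\gcd(n,9)=1$ in every residue class involved, none of the denominators vanishes at $a=q^{\pm n}$, so it suffices to show each left-hand side vanishes at $a=q^{-n}$ (the value $a=q^n$ then follows from the $a\mapsto a^{-1}$ symmetry). Setting $a=q^{-n}$ in the first sum, the factor coming from $a^7q^{-1}$, namely $(q^{-7n-1};q^9)_k$, combines with $(q^9;q^9)_k$ exactly as in \eqref{qdk-0} to produce $(-1)^k{N\brack k}_{q^9}\,q^{9{N-k\choose 2}-9{N\choose 2}}$, where $N=(7n+1)/9$ is an integer because $9\mid 7n+1$ for $n\equiv5\pmod9$. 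Using \eqref{qdk-begin}--\eqref{qdk-end}, the four surviving $q$-shifted factorials pair into two ratios $(q^{\alpha};q^9)_k/(q^{\beta};q^9)_k$ with $\alpha\equiv\beta\pmod9$ and $\alpha\geqslant\beta$, each a polynomial in $q^{9k}$; their product is a polynomial $P(q^{9k})$ of degree $N-3$. As $N-3<N$, the base-$q^9$ form of the $q$-binomial identity \eqref{eq:qbino} makes the whole sum vanish. The remaining three sums are treated identically, with terminating indices $N=(5n+2)/9,\ (8n+2)/9,\ (7n+4)/9$, and a short computation gives $\deg P=N-3$ in each case.

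For the passage to $\Phi_n(q)^2$, observe that as $a\to1$ the modulus $(1-aq^n)(a-q^n)$ tends to $(1-q^n)^2$, which is divisible by $\Phi_n(q)^2$, whereas the limiting denominators $(q^9;q^9)_k^3$ are coprime to $\Phi_n(q)$ (again because $\gcd(n,9)=1$ and $1\leqslant j\leqslant k\leqslant n-1$). Hence each sum is $\equiv0\pmod{\Phi_n(q)^2}$, exactly as Theorem~\ref{main-1} was deduced from \eqref{eq:a-1}, and the three stated congruences follow.

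I expect the main obstacle to be choosing the powers of $a$ correctly, since these are what make the mechanism run. Writing $a^m$ for the common numerator exponent and $a^{m'}$ for the denominator exponent, three demands pin them down: reduction to the target at $a=1$ (forcing the numerator entries $q^{-r},q^{-r},q^{-(9-2r)}$), termination of the sum at $a=q^{\pm n}$ in the prescribed residue class (forcing $mn\equiv-r\pmod9$, whence $N=(mn+r)/9$), and survival of genuine polynomials in $q^{9k}$ (forcing $m'\equiv2m\pmod9$). Granting these, the congruences $\alpha\equiv\beta\pmod9$ for the paired ratios are automatic and the degree identity $\deg P=(mn+r)/9-3=N-3$ emerges once the $m'n$-contributions cancel. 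The only genuinely $n$-dependent point is that each paired difference $\alpha-\beta$ stay $\geqslant0$; the tightest of these fails only at the small values $n=2,5$, and excluding exactly those is precisely the hypothesis $n>9$ --- which also explains why distinct residues call for the distinct insertions $a^5,a^7,a^8$.
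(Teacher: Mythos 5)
Your proposal is correct and follows exactly the paper's route: the four parametric congruences you write down modulo $(1-aq^n)(a-q^n)$ are precisely the ones the paper lists (in a different order), and your verification---termination at $k=N$ with $N=(7n+1)/9,(5n+2)/9,(8n+2)/9,(7n+4)/9$, pairing the remaining factors into ratios that are polynomials in $q^{9k}$ of total degree $N-3$, killing the sum via \eqref{eq:qbino}, and letting $a\to1$---is the same mechanism as in the proofs of Theorems \ref{main-1} and \ref{thm-1}, which the paper invokes by analogy without spelling out. In fact you supply more detail than the paper does, including the correct accounting of why $n>9$ is needed (the pairings $(q^{5n-2};q^9)_k/(q^{n+9};q^9)_k$-type differences such as $n-14$, $n-11$, $2n-13$ must be nonnegative, which fails exactly at $n=2,5$).
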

\begin{proof}This time the parametric generalizations of
these congruences are as follows. Modulo $(1-aq^n)(a-q^n)$,
\begin{align*}
\sum_{k=0}^{n-1}\frac{(a^7q^{-1},q^{-1}/a^7,q^{-7};q^9)_k q^{9k}}{(a^5q^9,q^9/a^5,q^9;q^9)_k}
&\equiv 0 \quad\text{if $n\equiv 5\pmod{9}$,} \\[5pt]
\sum_{k=0}^{n-1}\frac{(a^8q^{-2},q^{-2}/a^8,q^{-5};q^9)_k q^{9k}}{(a^7q^9,q^9/a^7,q^9;q^9)_k}
&\equiv 0 \quad\text{if $n\equiv 2\pmod{9}$ and $n>9$,} \\[5pt]
\sum_{k=0}^{n-1}\frac{(a^5q^{-2},q^{-2}/a^5,q^{-5};q^9)_k q^{9k}}{(aq^9,q^9/a,q^9;q^9)_k}
&\equiv 0 \quad\text{if $n\equiv 5\pmod{9}$ and $n>9$,} \\[5pt]
\sum_{k=0}^{n-1}\frac{(a^7q^{-4},q^{-4}/a^7,q^{-1};q^9)_k q^{9k}}{(a^5q^9,q^9/a^5,q^9;q^9)_k}
&\equiv 0 \quad\text{if $n\equiv 2\pmod{9}$ and $n>9$.}
\end{align*}
\end{proof}

\section{Concluding remarks and open problems}
In this section we propose several conjectures for further study. Not like before, there is no symmetry in the following two conjectures.
It seems difficult to find the corresponding parametric generalizations.
\begin{conj}\label{conj-3}
Let $n$ be a positive integer with $n\equiv 4,7\pmod{9}$. Then
\begin{align*}
\sum_{k=0}^{n-1}\frac{(q,q^2,q^6;q^9)_k q^{9k}}{(q^9;q^9)_k^3}
\equiv 0 \pmod{\Phi_n(q)^2}.
\end{align*}
\end{conj}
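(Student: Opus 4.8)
The plan is to push the creative microscoping method of Sections~2--4 into the asymmetric regime. I would search for a one-parameter generalization of the shape
\[
\sum_{k=0}^{n-1}\frac{(a^{p_1}q,\,a^{p_2}q^2,\,a^{p_3}q^6;q^9)_k\,q^{9k}}{(a^{e_1}q^9,\,a^{e_2}q^9,\,a^{e_3}q^9;q^9)_k}\equiv 0\pmod{(1-aq^n)(a-q^n)},
\]
with exponents chosen so that the limit $a\to1$ recovers the sum in Conjecture~\ref{conj-3}. Since $n\equiv4,7\pmod9$ forces $\gcd(9,n)=1$, the limit of the denominator is coprime to $\Phi_n(q)$ while $(1-aq^n)(a-q^n)$ degenerates to $(1-q^n)^2$, which is divisible by $\Phi_n(q)^2$; so, exactly as in the proofs of Theorems~\ref{main-1} and~\ref{thm-1}, the conjecture would follow once I show that the (now terminating) left-hand side vanishes identically at $a=q^{-n}$ and at $a=q^{n}$. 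A mild a~priori encouragement is that the target series is balanced, $q^9\cdot q^9=q\cdot q^2\cdot q^6\cdot q^9$, so a $q$-Saalschütz–type collapse to $0$ is exactly the behaviour one expects, and it is precisely such collapses that \eqref{eq:qbino} encodes.

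The essential departure from Sections~2--5 is that $q,q^2,q^6$ are pairwise distinct, so the $a\leftrightarrow a^{-1}$-symmetric dressing $a^{s}q^{r},\,a^{-s}q^{r}$ that made both evaluations vanish at once is unavailable; this is the lack of symmetry flagged above. I would therefore break the symmetry and have a \emph{different} numerator factor terminate the sum at each of the two points. For $n\equiv4\pmod9$, the choice $p_2=5$ makes $a^{p_2}q^2\big|_{a=q^{-n}}=q^{2-5n}$ a negative multiple of $q^{9}$, so $(q^{2-5n};q^9)_k$ truncates the sum at $k=(5n-2)/9\leqslant n-1$; the choice $p_1=-7$ makes $a^{p_1}q\big|_{a=q^{n}}=q^{1-7n}$ truncate it at $k=(7n-1)/9\leqslant n-1$ at the other point. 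At each evaluation I would then, precisely as in \eqref{qdk-0}--\eqref{qdk-end}, extract the terminating factor as a $q^9$-binomial coefficient times $(-1)^k q^{9\binom{m-k}{2}}$, pair the two surviving numerator factors with two of the denominators so that what remains is a polynomial $P(q^{9k})$, and invoke \eqref{eq:qbino}, which kills the sum as soon as $\deg P$ is strictly below the truncation length $m$. The case $n\equiv7\pmod9$ would be handled by an analogous asymmetric choice of exponents.

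The main obstacle is to fix the remaining exponents $p_3,e_1,e_2,e_3$ so that, \emph{simultaneously} at both $a=q^{n}$ and $a=q^{-n}$, three requirements hold: (i)~no denominator Pochhammer acquires a vanishing factor in the truncation range; (ii)~each surviving numerator exponent exceeds the exponent of its paired denominator by a positive multiple of $9$, so that the corresponding ratio is genuinely a polynomial in $q^{9k}$; and (iii)~the total degree of $P(q^{9k})$ stays below the truncation length. In the symmetric situations of Theorems~\ref{main-1} and~\ref{thm-1} these demands were balanced automatically and the degree estimate came out tight (the analogue of $P$ had degree exactly $m-1$); here one and the same set of exponents must satisfy the constraints at two different evaluation points with two different truncation lengths, and it is not clear a single auxiliary parameter can do so. The likely resolution is to pass to a genuine two- or three-parameter generalization, divisible by a product such as $(1-aq^n)(1-bq^n)$ with the distinct parameters $a,b$ carrying the distinct numerator exponents; securing the attendant degree bound is the heart of the difficulty.
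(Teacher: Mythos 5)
The paper offers no proof of this statement to compare against: it is Conjecture~\ref{conj-3}, and Section~6 explicitly leaves it open, remarking that ``there is no symmetry in the following two conjectures'' and that ``it seems difficult to find the corresponding parametric generalizations.'' Your proposal correctly reconstructs the creative-microscoping strategy and even gets the truncation arithmetic right (for $n\equiv 4\pmod 9$, the exponent $p_2=5$ forces termination at $k=(5n-2)/9$ when $a=q^{-n}$, and $p_1=-7$ at $k=(7n-1)/9$ when $a=q^{n}$, both $\leqslant n-1$), but it is not a proof: it stops exactly at the step you yourself flag as ``the heart of the difficulty.'' You never exhibit $p_3,e_1,e_2,e_3$, never check the mod-$9$ integrality conditions that make each paired ratio a polynomial in $q^{9k}$, and never verify the degree bound against the two different truncation lengths --- and these conditions genuinely conflict for the ``obvious'' pairing (matching $a^{-7}q$'s partner with the same denominator at both points forces $e_1\equiv 4$ and $e_1\equiv 1\pmod 9$ simultaneously). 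Moreover, the fallback you sketch is a wrong turn as stated: divisibility by a product such as $(1-aq^n)(1-bq^n)$ requires the sum to vanish at $a=q^{-n}$ \emph{identically in} $b$ (and symmetrically), a far stronger demand than the two point evaluations your mechanism produces, and you give no mechanism for it.

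Ironically, the single-parameter scheme you abandoned can be completed --- the resolution is not more parameters but \emph{crosswise} pairing: the numerator--denominator matching that satisfies the integrality constraints at $a=q^{-n}$ is not the one that works at $a=q^{n}$. For $n\equiv 4\pmod 9$ one can take
\begin{align*}
\sum_{k=0}^{n-1}\frac{(a^{-7}q,\,a^{5}q^{2},\,a^{-2}q^{6};q^9)_k\,q^{9k}}{(a^{-5}q^{9},\,aq^{9},\,q^{9};q^9)_k}.
\end{align*}
At $a=q^{-n}$ the ratios $(q^{1+7n};q^9)_k/(q^{9+5n};q^9)_k$ and $(q^{6+2n};q^9)_k/(q^{9-n};q^9)_k$ are polynomials in $q^{9k}$ of degrees $(2n-8)/9$ and $(n-1)/3$, totalling $(5n-11)/9=(5n-2)/9-1$, exactly within reach of \eqref{eq:qbino} (and the linear exponent produced by the truncating factor comes out to $j=0$); at $a=q^{n}$ the crossed pairings $(q^{2+5n};q^9)_k/(q^{9+n};q^9)_k$ and $(q^{6-2n};q^9)_k/(q^{9-5n};q^9)_k$ have degrees $(4n-7)/9$ and $(n-1)/3$, totalling $(7n-1)/9-1$, again tight. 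The case $n\equiv 7\pmod 9$ is handled analogously by, e.g., numerator $(a^{4}q,\,a^{-8}q^{2},\,a^{-4}q^{6};q^9)_k$ over denominator $(a^{-7}q^{9},\,a^{-1}q^{9},\,q^{9};q^9)_k$, with truncations at $(4n-1)/9$ and $(8n-2)/9$ and degree totals again equal to one less than the truncation lengths. Since $\gcd(9,n)=1$ keeps the $a\to1$ denominators coprime to $\Phi_n(q)$, the limit argument of Theorem~\ref{main-1} then yields the conjecture. This explicit construction and verification --- the choice of exponents, the residue checks mod $9$, and the tight degree counts at \emph{both} evaluation points --- is precisely what your writeup needed and lacks.
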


\begin{conj}\label{conj-4}
Let $n$ be a positive integer with $n\equiv 5\pmod{9}$. Then
\begin{align*}
\sum_{k=0}^{n-1}\frac{(q^{-1},q^{-2},q^{-6};q^9)_k q^{9k}}{(q^9;q^9)_k^3}
\equiv 0 \pmod{\Phi_n(q)^2}.
\end{align*}
\end{conj}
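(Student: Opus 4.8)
The plan is to adapt the creative microscoping method used for Theorems~\ref{main-1} and~\ref{thm-1}, but in an \emph{asymmetric} form, since the three numerator parameters $q^{-1},q^{-2},q^{-6}$ are pairwise distinct and, as the authors observe, admit no $a\leftrightarrow a^{-1}$ symmetry. Concretely, for $n\equiv 5\pmod 9$ I would introduce the parametric companion
\begin{align*}
\sum_{k=0}^{n-1}\frac{(a^7q^{-1},\,a^{-5}q^{-2},\,a^2q^{-6};q^9)_k\,q^{9k}}{(a^{-1}q^9,\,a^5q^9,\,q^9;q^9)_k}\equiv 0\pmod{(1-aq^n)(a-q^n)},
\end{align*}
whose limit as $a\to1$ is precisely the sum in the conjecture. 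The $a$-exponents $(7,-5,2)$ upstairs and $(-1,5,0)$ downstairs have equal sum $4$, and the denominator factors $1-a^{-1}q^{9+9i}$, $1-a^5q^{9+9i}$, $1-q^{9+9i}$ never vanish at $a=q^{\pm n}$ because $\gcd(9,n)=1$ and $9\nmid n$; likewise the $a\to1$ denominator is coprime to $\Phi_n(q)$. It therefore suffices to show that the left-hand side vanishes at the two points $a=q^{-n}$ and $a=q^{n}$ separately, after which the standard argument — $(1-aq^n)(a-q^n)\to(1-q^n)^2$ is divisible by $\Phi_n(q)^2$ on letting $a\to1$ — completes the proof.

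The two vanishings are produced by two \emph{different} numerator factors. Setting $a=q^{-n}$ turns $a^7q^{-1}$ into $q^{-1-7n}=q^{-9M_1}$ with $M_1=(1+7n)/9$ (here $n\equiv5\pmod9$ guarantees $9\mid 1+7n$), so by the identity behind \eqref{qdk-0} this factor together with the undeformed $(q^9;q^9)_k$ yields $(-1)^k{M_1\brack k}_{q^9}q^{9\binom{M_1-k}{2}-9\binom{M_1}{2}}$. The four surviving factors $q^{-2+5n},q^{-6-2n}$ (numerator) and $q^{9+n},q^{9-5n}$ (denominator) then pair off, through ratio identities of the type \eqref{qdk-begin}--\eqref{qdk-end}, into a polynomial in $q^{9k}$ of total degree $\bigl[(-11+4n)+(-15+3n)\bigr]/9=(7n-26)/9<M_1$, so \eqref{eq:qbino} forces the sum to be $0$. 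Setting instead $a=q^{n}$, it is now $a^{-5}q^{-2}\mapsto q^{-2-5n}=q^{-9M_2}$, $M_2=(2+5n)/9$, that supplies the $q^9$-binomial, while the remaining factors combine into a polynomial in $q^{9k}$ of degree $(5n-25)/9<M_2$; again \eqref{eq:qbino} gives $0$.

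The genuinely hard point is to find a \emph{single} choice of $a$-exponents that collapses \emph{both} specializations at once. Demanding that each leftover ratio $(q^{\alpha};q^9)_k/(q^{\beta};q^9)_k$ be an honest polynomial in $q^{9k}$ (that is, $9\mid\alpha-\beta$ and $\alpha\geqslant\beta$) at $a=q^{-n}$ and at $a=q^{n}$ pins the exponents modulo $9$: one is forced to $c_1\equiv 8$, $c_2\equiv 5$, $b_3\equiv 2\pmod 9$ in $a^{c_1}q^9,\,a^{c_2}q^9,\,a^{b_3}q^{-6}$, and these congruences turn out to be mutually consistent. The residual freedom is then spent choosing representatives — here $c_1=-1$, $c_2=5$, $b_3=2$ — that keep all four exponent-differences nonnegative while holding both total degrees strictly below $M_1$ and $M_2$; the inequalities $7n-26<1+7n$ and $5n-25<2+5n$ confirm this for every $n\equiv5\pmod 9$. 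I expect the real obstacle to be exactly this simultaneous degree bookkeeping: the symmetric method collapses two conditions into one via $a\leftrightarrow a^{-1}$, whereas here one set of exponents must thread both needles at once. The companion Conjecture~\ref{conj-3} should follow from the mirror-image construction with positive parameters.
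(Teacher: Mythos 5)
You should be aware up front that the paper contains \emph{no} proof of this statement: it is Conjecture \ref{conj-4}, explicitly left open, and the author even remarks that the two conjectures of Section 6 lack the $a\leftrightarrow a^{-1}$ symmetry and that ``it seems difficult to find the corresponding parametric generalizations.'' So your proposal cannot be compared with the paper's argument --- it supplies exactly the ingredient the paper says is missing, namely an asymmetric parametric companion handled by the same microscoping template as Theorem \ref{thm-1}. Having checked it, I believe your construction is correct. For $n\equiv 5\pmod 9$ one indeed has $9\mid 7n+1$ and $9\mid 5n+2$, so at $a=q^{-n}$ the factor $(a^7q^{-1};q^9)_k=(q^{-9M_1};q^9)_k$ with $M_1=(7n+1)/9\leqslant n-1$ (equality at $n=5$) truncates the sum and combines with $(q^9;q^9)_k$ into $(-1)^k{M_1\brack k}_{q^9}q^{9\binom{M_1-k}{2}-9\binom{M_1}{2}}$, while the pairings $(q^{5n-2};q^9)_k/(q^{n+9};q^9)_k$ and $(q^{-2n-6};q^9)_k/(q^{9-5n};q^9)_k$ have exponent gaps $4n-11$ and $3n-15$, which are nonnegative multiples of $9$ precisely when $n\equiv 5\pmod 9$ and $n\geqslant 5$, producing a polynomial in $q^{9k}$ of degree $(7n-26)/9=M_1-3<M_1$; symmetrically at $a=q^{n}$ the factor $a^{-5}q^{-2}=q^{-9M_2}$, $M_2=(5n+2)/9\leqslant n-1$, governs, with gaps $2n-10$ and $3n-15$ and total degree $M_2-3<M_2$. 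In both cases \eqref{eq:qbino} (in base $q^9$) annihilates the sum, and since $3\nmid n$ none of the denominator factors $1-a^{-1}q^{9i}$, $1-a^5q^{9i}$, $1-q^{9i}$ vanishes at $a=q^{\pm n}$, nor modulo $\Phi_n(q)$ at $a=1$; the limit $a\to 1$ then gives the conjecture modulo $\Phi_n(q)^2$ exactly as in the proof of Theorem \ref{main-1}. I also verified the smallest case $n=5$ by hand: there both leftover ratios at $a=q^{n}$ collapse to $1$ and the vanishing is the $j=0$ instance of \eqref{eq:qbino}.

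Two minor points. First, your ``forcing'' discussion of the $a$-exponents modulo $9$ is heuristic and not needed; all the proof requires is that your one choice $(7,-5,2)$ over $(-1,5,0)$ simultaneously meets the divisibility, nonnegativity ($n\geqslant 5$, automatic for $n\equiv5\pmod 9$), and degree constraints at both specializations, which you should state and check explicitly in the write-up. Second, your closing aside about Conjecture \ref{conj-3} is too quick: the mirror-image choice with positive exponents forces $9\mid 7n-1$ and $9\mid 5n-2$, i.e.\ $n\equiv 4\pmod 9$ only, so the case $n\equiv 7\pmod 9$ of that conjecture would need a genuinely different exponent set (compare the paired $a^5$ versus $a^8$ parametrizations in Section 5); this does not affect the correctness of your proof of Conjecture \ref{conj-4} itself.
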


There are many similar conjectures.
Let $n>1$ be a positive integer. For any rational number $x$ whose denominator is coprime with $n$, let $\langle x\rangle_n$ denote the least non-negative residue of $x$ modulo $n$.
We would like to propose the following two conjectures.

\begin{conj}\label{conj-first}
Let $n$ be a positive integer with $n\equiv 2\pmod{3}$, and let $m$ be a positive integer with $\gcd(m,n)=1$. If $r$ is an integer satisfying
$0<\langle \frac{r}{3m}\rangle_n\leqslant\frac{2n-1}{3}$, then
\begin{align*}
\sum_{k=0}^{n-1}\frac{(q^{m};q^{3m})_k (q^{r};q^{3m})_k (q^{2m-r};q^{3m})_k q^{3mk}}{(q^{3m};q^{3m})_k^3}
\equiv 0 \pmod{\Phi_n(q)^2}.
\end{align*}
\end{conj}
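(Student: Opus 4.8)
The plan is to attack Conjecture~\ref{conj-first} by the same \emph{creative microscoping} strategy used for Theorems~\ref{main-1} and~\ref{thm-1}: introduce an auxiliary parameter $a$ so that the truncated sum becomes divisible by $(1-aq^n)(a-q^n)$, and then pass to the limit $a\to1$. Writing $Q=q^{3m}$, the series has base $Q$ and numerator factors $q^m,q^r,q^{2m-r}$ whose exponents satisfy $m+r+(2m-r)=3m$, so the series is balanced in the sense that the product of the three numerator parameters equals the base $Q$. Because $n\equiv2\pmod3$ and $\gcd(m,n)=1$, we have $\gcd(3m,n)=1$; hence none of the denominator factors $(q^{3m};q^{3m})_k$ ever meets $\Phi_n(q)$, and the $a\to1$ limit of the denominator of any parametric lift will be coprime to $\Phi_n(q)$. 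Since $(1-aq^n)(a-q^n)\to(1-q^n)^2$ carries the factor $\Phi_n(q)^2$, the congruence modulo $\Phi_n(q)^2$ will drop out of the parametric congruence exactly as in the proof of Theorem~\ref{main-1}.

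First I would seek a parametric generalization of the shape
\begin{align*}
\sum_{k=0}^{n-1}\frac{(q^m;q^{3m})_k\,(aq^r;q^{3m})_k\,(q^{2m-r}/a;q^{3m})_k\,q^{3mk}}{(aq^{3m};q^{3m})_k\,(q^{3m}/a;q^{3m})_k\,(q^{3m};q^{3m})_k}\equiv0\pmod{(1-aq^n)(a-q^n)},
\end{align*}
in which the $a$-powers balance in both numerator and denominator, so that setting $a=1$ returns the sum of Conjecture~\ref{conj-first}. The mechanism I would then try to verify is the familiar one: at $a=q^{-n}$ (respectively $a=q^{n}$) one of the numerator factors should collapse to a $q$-shifted factorial with a negative leading exponent, truncating the series at some index $N<n$; rewriting the resulting summand as $(-1)^k{N\brack k}_{q^{3m}}q^{3m\binom{N-k}{2}}P(q^{3mk})$ with $\deg_{q^{3m}}P<N$, the identity~\eqref{eq:qbino} would force the whole sum to vanish. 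The role of the hypothesis $0<\langle r/(3m)\rangle_n\leqslant(2n-1)/3$ is to place the truncation index $N$ in the admissible range $0<N\leqslant n-1$ and, simultaneously, to guarantee the strict degree inequality $\deg P<N$, playing exactly the part that the condition $r\leqslant d-2$ plays in Theorem~\ref{main-1}.

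The hard part will be producing the correct parametric lift in the first place. In every case treated above, the insertion of $a$ exploited a genuine symmetry: two of the numerator factors were literally equal (as in $(q^r,q^r,q^{9-2r};q^9)_k$), so they could be split as $a^sq^r$ and $a^{-s}q^r$. Here the three exponents $m$, $r$, $2m-r$ are pairwise distinct for general $r$, so no such symmetric splitting is available; this is precisely the obstruction the authors flag for the asymmetric Conjectures~\ref{conj-3} and~\ref{conj-4}. I would therefore experiment with asymmetric insertions like the candidate above, checking in each trial whether $a=q^{\pm n}$ yields an \emph{exact} truncation (not merely one valid modulo $\Phi_n(q)$) and whether the degree bound survives; the balance relation $m+r+(2m-r)=3m$ suggests that a well-poised extension of the underlying ${}_3\phi_2$ series is the right object to look for. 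Should no parametric lift behave correctly, a viable fallback is to prove divisibility by $\Phi_n(q)$ directly via a single truncation and then upgrade to $\Phi_n(q)^2$ by showing that the $q$-derivative of the sum also vanishes modulo $\Phi_n(q)$ at the relevant root of unity, which is the analytic counterpart of the double root carried by $(1-aq^n)(a-q^n)$.
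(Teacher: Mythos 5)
This statement is Conjecture~\ref{conj-first} of the paper: it is \emph{open}, and the paper contains no proof of it. The only thing the author establishes is the single case $r=m$ (with $r=-m$ for Conjecture~\ref{conj-second}), by taking $d=3$, $r=1$, $q\to q^m$ in Theorem~\ref{main-1} and using that $\Phi_n(q)$ divides $\Phi_n(q^m)$ when $\gcd(m,n)=1$. Your proposal likewise does not prove the statement: it is a research plan whose decisive step --- producing a parametric lift that vanishes at both $a=q^n$ and $a=q^{-n}$ --- is left unconstructed, as you yourself concede. The gap is exactly the one the author flags when introducing these conjectures: for generic $r$ the three numerator exponents $m$, $r$, $2m-r$ are pairwise distinct modulo $3m$, so the symmetric insertions $(a^sq^r, q^r/a^s,\dots)$ that drive every proof in the paper are unavailable, and no asymmetric substitute is exhibited.

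Moreover, the specific candidate lift you display fails concretely. At $a=q^{-n}$ the factor $(aq^r;q^{3m})_k=(q^{r-n};q^{3m})_k$ vanishes for large $k$ only if $n\equiv r\pmod{3m}$, which the hypothesis $0<\langle \frac{r}{3m}\rangle_n\leqslant\frac{2n-1}{3}$ does not supply: that hypothesis constrains a residue modulo $n$, not the residue of $n$ modulo $3m$. Since $\gcd(3m,n)=1$ you could repair the truncation by attaching $a^s$ with $sn\equiv -r\pmod{3m}$, but then the two remaining ratios must simultaneously be polynomials in $q^{3mk}$ in the manner of \eqref{qdk-begin}--\eqref{qdk-end}, their total degree must stay strictly below the truncation index so that \eqref{eq:qbino} applies, and the sum must \emph{also} vanish at $a=q^{n}$, where the required exponent $s$ is in general different; nothing in your sketch shows these constraints are compatible, and the fact that the conjecture's hypothesis involves $\langle r/(3m)\rangle_n$ rather than a fixed congruence class for $r$ suggests that a single uniform lift of the paper's type may not exist --- which is presumably why the author left this as a conjecture. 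Your fallback (show the sum and its $q$-derivative vanish at $n$-th roots of unity) is not an argument but a restatement of what divisibility by $\Phi_n(q)^2$ means; absent a mechanism forcing either vanishing, it adds nothing. So while you correctly identify the intended method and the true obstruction, the statement remains unproved by your proposal, in agreement with its status in the paper.
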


\begin{conj}\label{conj-second}
Let $n>1$ be a positive integer with $n\equiv 1\pmod{3}$, and let $m$ be a positive integer with $\gcd(m,n)=1$. If $r$ is an integer satisfying
$0<\langle \frac{r}{3m}\rangle_n\leqslant\frac{2n-5}{3}$, then
\begin{align*}
\sum_{k=0}^{n-1}\frac{(q^{-m};q^{3m})_k (q^{r};q^{3m})_k (q^{-2m-r};q^{3m})_k q^{3mk}}{(q^{3m};q^{3m})_k^3}
\equiv 0 \pmod{\Phi_n(q)^2}.
\end{align*}
\end{conj}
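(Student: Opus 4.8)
The plan is to attack Conjecture~\ref{conj-second} by the creative microscoping method used for Theorems~\ref{main-1} and \ref{thm-1}: introduce an auxiliary parameter $a$, prove a parametric $q$-congruence modulo $(1-aq^n)(a-q^n)$, and then let $a\to 1$. Since $n\equiv 1\pmod 3$ and $\gcd(m,n)=1$, we have $\gcd(3m,n)=1$, so each $1-q^{3mi}$ with $1\le i\le n-1$ is coprime to $\Phi_n(q)$; hence at $a=1$ the denominator $(q^{3m};q^{3m})_k^3$ remains coprime to $\Phi_n(q)$, while $(1-aq^n)(a-q^n)\to(1-q^n)^2$ carries the factor $\Phi_n(q)^2$. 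Thus the entire task reduces to inserting $a$ so that the deformed sum is divisible by $(1-aq^n)(a-q^n)$, i.e. vanishes at both $a=q^{-n}$ and $a=q^{n}$.

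Following the template of \eqref{eq:6-5}, I would deform two of the three numerator parameters by reciprocal powers of $a$ and two of the three denominator copies of $q^{3m}$ likewise, leaving one factor of each untouched:
\[
\sum_{k=0}^{n-1}\frac{(a^{s}q^{-m},\,a^{-s}q^{-2m-r},\,q^{r};q^{3m})_k\,q^{3mk}}{(a^{t}q^{3m},\,a^{-t}q^{3m},\,q^{3m};q^{3m})_k}.
\]
At $a=1$ this is the sum of the conjecture. To check vanishing at $a=q^{-n}$ one picks $s$ with $sn\equiv -m\pmod{3m}$ (possible since $n$ is invertible modulo $3m$), so that $a^{s}q^{-m}=q^{-sn-m}$ becomes a negatively shifted $q^{3m}$-factorial and the sum terminates at an index $\ell\in(0,n-1]$. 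One then rewrites the terminating sum in the form
\[
\sum_{k=0}^{\ell}(-1)^k{\ell\brack k}_{q^{3m}}q^{3m\binom{\ell-k}{2}}P(q^{3mk}),
\]
exactly as in \eqref{qdk-0} and the step leading to \eqref{eq:a-p}, and invokes the $q$-binomial identity \eqref{eq:qbino}. The residue condition $0<\langle r/3m\rangle_n\le(2n-5)/3$ is what guarantees that the leftover polynomial $P(q^{3mk})$ has degree strictly below the binomial upper index $\ell$, in the same way that $n\equiv 5\pmod 6$ yields the bound $(5n-7)/6<(5n-1)/6$ used after \eqref{eq:sum}. The denominator powers $t$ are chosen merely so that $a^{\pm t}q^{3m}$ does not terminate the sum at $a=q^{\pm n}$, which holds as soon as $t\not\equiv 0\pmod{3m}$.

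The main obstacle is the \emph{second} specialization $a=q^{n}$. In every proven case of this type the numerator carries two equal parameters, so the deformation can be made invariant under $a\leftrightarrow a^{-1}$, and vanishing at $a=q^{n}$ follows from vanishing at $a=q^{-n}$ for free. Here the three exponents $-m,\,r,\,-2m-r$ are pairwise distinct, so any fixed point of $a\mapsto a^{-1}$ would force two of them to coincide: no symmetric deformation exists, and the two specializations must be verified independently. Concretely, the factor terminating the sum at $a=q^{-n}$ is $a^{s}q^{-m}$, whereas at $a=q^{n}$ the terminating factor is $a^{-s}q^{-2m-r}=q^{-sn-2m-r}$, which requires $sn\equiv-2m-r\pmod{3m}$ simultaneously with the earlier $sn\equiv-m\pmod{3m}$; these are compatible only when $m+r\equiv 0\pmod{3m}$, which fails for generic $r$. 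Hence a single reciprocal deformation cannot handle both ends at once, and the real content of the conjecture is to find exponents (possibly distinct shifts on all three numerator and all three denominator factors, summing to zero in each, and subject to the two telescoping and degree constraints) that place both terminating sums under \eqref{eq:qbino}. The most promising remedy I can see is \emph{double} microscoping: introduce a second parameter $b$, deform as $(a\,q^{-m},\,b\,q^{r},\,(ab)^{-1}q^{-2m-r})$ over a suitably deformed denominator, prove divisibility by $(1-aq^n)(a-q^n)$ with $b$ free and symmetrically in $b$, and then control the additional cyclotomic factors appearing as $a,b\to 1$ so that exactly $\Phi_n(q)^2$ survives; carrying this out while maintaining the degree bounds is where I expect the hard work to lie.
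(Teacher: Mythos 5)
You should first note that the statement you are asked to prove is left \emph{open} in the paper: it is Conjecture~\ref{conj-second}, and the author proves only the single case $r=-m$, obtained by setting $d=3$, $r=-1$, $q\to q^m$ in Theorem~\ref{main-1} and using the fact that $\Phi_n(q)$ divides $\Phi_n(q^m)$ when $\gcd(m,n)=1$. The paper even flags the underlying difficulty explicitly (``there is no symmetry \dots\ It seems difficult to find the corresponding parametric generalizations''). So there is no hidden proof in the paper that your proposal could match or diverge from; the question is whether your argument closes the gap, and it does not. Your single-parameter deformation is analyzed correctly as far as it goes, but your own compatibility computation shows it is dead on arrival for the conjecture as stated: requiring termination at both $a=q^{-n}$ (via $sn\equiv -m\pmod{3m}$) and $a=q^{n}$ (via $sn\equiv -2m-r\pmod{3m}$) forces $r\equiv -m\pmod{3m}$, which is precisely the already-known case covered by Theorem~\ref{main-1}. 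In other words, the method you propose reproduces exactly what the paper proves and nothing more; for generic $r$ in the range $0<\langle r/3m\rangle_n\leqslant (2n-5)/3$ the argument never gets off the ground.

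The fallback you offer --- ``double microscoping'' with a second parameter $b$ --- is a suggestion, not a proof, and it has concrete unverified obligations at every step: you do not show that the $(a,b)$-deformed sum vanishes at $a=q^{\pm n}$ with $b$ free (the terminating factor now depends on $b$, so the reduction to the terminating form \eqref{eq:a-p} and the appeal to \eqref{eq:qbino} no longer apply verbatim); you do not verify the degree bound on $P(q^{3mk})$, which is the one place where the hypothesis $0<\langle r/3m\rangle_n\leqslant (2n-5)/3$ must actually enter (your claim that it plays the role of the bound $(5n-7)/6<(5n-1)/6$ after \eqref{eq:sum} is plausible but nowhere checked); and you do not explain how the double limit $a,b\to 1$ isolates $\Phi_n(q)^2$ rather than extraneous cyclotomic factors coming from the $b$-deformation. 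To be fair, your diagnosis of the obstruction --- that the three pairwise-distinct exponents $-m$, $r$, $-2m-r$ admit no deformation invariant under $a\mapsto a^{-1}$, so the two specializations cannot be traded for one another as in \eqref{eq:6-5} --- is exactly the asymmetry the paper identifies as the reason these statements remain conjectural. But identifying the obstruction is where both you and the paper stop; the conjecture remains unproved by your proposal.
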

Letting $d=3$, $r=1$ and $q\to q^{m}$ in Theorem \ref{main-1}, and noticing that $\Phi_n(q)$ is a factor of $\Phi_n(q^m)$ for $\gcd(m,n)=1$,
we see that Conjectures \ref{conj-first} and \ref{conj-second}
are true for $r=m$ and $r=-m$, respectively.

\vskip 5mm \noindent{\bf Acknowledgments.} The author was partially
supported by the National Natural Science Foundation of China (grant 11771175).


\begin{thebibliography}{99}
\small \setlength{\itemsep}{-.8mm}

\bibitem{Andrews}G.E. Andrews, The theory of partitions, Cambridge University Press, Cambridge, 1998.

\bibitem{CLZ} K.K. Chan, L. Long, and V.V. Zudilin, A supercongruence motivated by the Legendre
family of elliptic curves, Mat. Zametki 88 (2010), 620--624; translation in Math. Notes 88 (2010), 599--602.

\bibitem{Guo-par}V.J.W. Guo, Some $q$-congruences with parameters, Acta Arith., to appear; arXiv: 1804.10963.

\bibitem{GPZ}V.J.W. Guo, H. Pan, Y. Zhang, The Rodriguez-Villegas type congruences for truncated q-hypergeometric functions, J. Number Theory 174 (2017), 358--368

\bibitem{GZ14}V.J.W. Guo, J. Zeng, Some $q$-analogues of supercongruences of Rodriguez-Villegas, J. Number Theory 145 (2014), 301--316.

\bibitem{GuoZu}V.J.W. Guo, W. Zudilin, A $q$-microscope for supercongruences, preprint, March 2018, arXiv:1803.01830.

\bibitem{Mortenson1}E. Mortenson, A supercongruence conjecture of Rodriguez-Villegas for a certain truncated hypergeometric function,
J. Number Theory 99 (2003), 139--147.

\bibitem{NP}H.-X. Ni, H. Pan, On a conjectured $q$-congruence of Guo and Zeng, Int. J. Number Theory 14 (2018), 1699--1707.

\bibitem{RV}F. Rodriguez-Villegas, Hypergeometric families of Calabi--Yau manifolds, in: Calabi-Yau
Varieties and Mirror Symmetry (Toronto, ON, 2001), Fields Inst.
Commun., 38, Amer. Math. Soc., Providence, RI, 2003, pp.~223--231.

\bibitem{SunZH}Z.-H. Sun, Congruences concerning Legendre polynomials, Proc. Amer. Math. Soc. 139 (2011), 1915--1929.

\bibitem{SunZW}Z.-W. Sun, Super congruences and Euler numbers, Sci. China Math. 54 (2011), 2509--2535.

\bibitem{Tauraso1} R. Tauraso, An elementary proof of a Rodriguez-Villegas supercongruence, preprint, 2009, arXiv:0911.4261v1.

\bibitem{Tauraso2} R. Tauraso, Supercongruences for a truncated hypergeometric series, Integers 12 (2012), \#A45.


\bibitem{Zudilin}W. Zudilin, Ramanujan-type supercongruences, J. Number Theory 129 (2009), 1848--1857.
\end{thebibliography}
\end{document}